\documentclass[11pt, a4paper]{amsart}
\usepackage{fullpage}
% Include AMS packages
\usepackage{amssymb}
\usepackage{commath}
\usepackage{empheq}
\usepackage{amsmath, amsthm}
\usepackage[foot]{amsaddr}

\usepackage{tikz-cd}

\usepackage{url}

\newcommand{\p}{\partial}
\newcommand{\mr}{\mathrm}
\newcommand{\SB}{B_A^{sym}}
\newcommand{\lb}{\left[}
\newcommand{\rb}{\right]}
\newcommand{\ol}{\overline}

\newcommand{\xra}{\xrightarrow}
\newcommand{\DS}{\Delta S}
\newcommand{\ve}{\varepsilon}
\renewcommand{\phi}{\varphi}
\newcommand{\HC}{\Delta H}
\newcommand{\DHP}{\Delta H_{+}}
\newcommand{\HB}{\mathsf{H}_A}
\newcommand{\HBI}{\mathsf{H}_I}
\newcommand{\HBP}{\mathsf{H}_{A+}}
\newcommand{\HBM}{\mathsf{H}_M}
\newcommand{\EDH}{\mathrm{Epi}\Delta H}
\newcommand{\cyc}{C_2}
\newcommand{\ifas}{\mathcal{IF}(as)}
\newcommand{\llb}{\left\lbrace}
\newcommand{\rrb}{\right\rbrace}
\newcommand{\llv}{\left\lvert}
\newcommand{\rrv}{\right\rvert}
\newcommand{\llangle}{\left\langle}
\newcommand{\rrangle}{\right\rangle}
\renewcommand{\H}{\mathcal{H}}
\newcommand{\RNum}[1]{\MakeUppercase{\romannumeral #1}}
\newcommand{\CM}{\mathbf{CMod}}
\newcommand{\MC}{\mathbf{ModC}}
\newcommand{\kmod}{\mathbf{Mod}_k}
\newcommand{\C}{\mathbf{C}}
\newcommand{\hocolim}{\underset{\DHP}{\mr{hocolim}}}

\mathchardef\mhyphen="2D

\newtheorem{thm}{Theorem}[section]

\theoremstyle{plain}
\newtheorem{prop}[thm]{Proposition}
\newtheorem{lem}[thm]{Lemma}
\newtheorem{cor}[thm]{Corollary}

\theoremstyle{definition}
\newtheorem{defn}[thm]{Definition}

\theoremstyle{remark}
\newtheorem{rem}[thm]{Remark}
\newtheorem{eg}[thm]{Example}

\makeatletter
\@namedef{subjclassname@1991}{\textup{2020} Mathematics Subject Classification}
\makeatother

% Include title, author and date, as appropriate
\title{Hyperoctahedral Homology for Involutive Algebras}
\author{Daniel Graves}
\address{School of Mathematics and Statistics, University of Sheffield, Sheffield, S3 7RH, UK}
\date{}

%\linespread{1.2}

\begin{document}

\keywords{hyperoctahedral homology, crossed simplicial group, functor homology, bar construction, equivariant infinite loop spaces}
\subjclass{55N35, 13D03, 55U15, 55P47}

\maketitle

\begin{abstract}
Hyperoctahedral homology is the homology theory associated to the hyperoctahedral crossed simplicial group. It is defined for involutive algebras over a commutative ring using functor homology and the hyperoctahedral bar construction of Fiedorowicz. The main result of the paper proves that hyperoctahedral homology is related to equivariant stable homotopy theory: for a discrete group of odd order, the hyperoctahedral homology of the group algebra is isomorphic to the homology of the fixed points under the involution of an equivariant infinite loop space built from the classifying space of the group.
\end{abstract}

\section*{Introduction}
\emph{Hyperoctahedral homology for involutive algebras} was introduced by Fiedorowicz \cite[Section 2]{Fie}. It is the homology theory associated to the \emph{hyperoctahedral crossed simplicial group} \cite[Section 3]{FL}. Fiedorowicz and Loday \cite[6.16]{FL} had shown that the homology theory constructed from the hyperoctahedral crossed simplicial group via a contravariant bar construction, analogously to cyclic homology, was isomorphic to Hochschild homology and therefore did not detect the action of the hyperoctahedral groups. Fiedorowicz demonstrated that a covariant bar construction did detect this action and sketched results connecting the hyperoctahedral homology of monoid algebras and group algebras to May's two-sided bar construction and infinite loop spaces, though these were never published.

In Section \ref{hyp-cat-sec} we recall the hyperoctahedral groups. We recall the \emph{hyperoctahedral category} $\HC$ associated to the hyperoctahedral crossed simplicial group. This category encodes an involution compatible with an order-preserving multiplication. We introduce the \emph{category of involutive non-commutative sets}, which encodes the same information by adding data to the preimages of maps of finite sets. We demonstrate this category is isomorphic to $\HC$.

Hyperoctahedral homology is defined in terms of \emph{functor homology}, sometimes known as the \emph{homology of small categories}. In Section \ref{fun-hom-sec} we recall the necessary background on functor homology, including the tensor product of modules over a small category, its left derived functors and chain complexes that can be used to compute them.

In Section \ref{hyp-hom-sec} we define hyperoctahedral homology for involutive algebras in terms of the \emph{hyperoctahedral bar construction} following Fiedorowicz. We discuss the connection of hyperoctahedral homology to the other homology theories arising from crossed simplicial groups. We define \emph{hyperoctahedral homology with coefficients in a module} and prove a \emph{universal coefficient theorem}.

In Section \ref{aug-alg-sec} we prove that for an augmented, involutive algebra there is a smaller chain complex that computes hyperoctahedral homology. This smaller complex calculates hyperoctahedral homology using only the epimorphisms in the category $\HC$ and the elements of the augmentation ideal.

In Section \ref{group-hom-sec} we use the results of Section \ref{aug-alg-sec} to prove a connection between hyperoctahedral homology and the \emph{group homology of a product of hyperoctahedral groups} with coefficients in certain modules. We show that when the ground ring is a field of characteristic zero, hyperoctahedral homology of an augmented, involutive algebra can be calculated as the coinvariants of a group action.

In Section \ref{exten-sec} we show that by appending an initial object to the category $\HC$ we obtain a symmetric strict monoidal category. We extend the hyperoctahedral bar construction to be defined on this category and demonstrate that we can calculate hyperoctahedral homology using this more structured category.

In Section \ref{Fie-sec} we provide a proof of \emph{Fiedorowicz's theorem for the hyperoctahedral homology of a monoid algebra}, for a monoid with involution. This result, Theorem \ref{monoid-thm}, relates the hyperoctahedral homology of a monoid algebra to May's two-sided bar construction and the monads associated to the little intervals and little $\infty$-cubes operads.

In Section \ref{grp-alg-sec} we prove the main result of the paper, Theorem \ref{main-theorem}. Consider a discrete group of odd order. Consider its group algebra with involution induced by sending a group element to its inverse. We prove that the hyperoctahedral homology of such a group algebra is isomorphic to the homology of the fixed points under the involution of a $C_2$-equivariant infinite loop space built from the classifying space of the group.

\subsection*{Acknowledgements}
I would like to thank Sarah Whitehouse for all her help, support and guidance. I am very grateful to James Brotherston and James Cranch for their helpful suggestions.

\subsection*{Notational conventions}
Throughout the paper $k$ will denote a unital commutative ring. We let $\cyc=\llangle t\mid t^2=1\rrangle$ denote the cyclic group of order two. For $n\geqslant 0$ we let $\Sigma_{n+1}$ denote the symmetric group on the set $[n]=\llb 0,\dotsc , n\rrb$. By abuse of notation we will let $id_{n}$ denote both the identity element of $\Sigma_{n+1}$ and the unique order-preserving bijection $[n]\rightarrow [n]$. Unless otherwise stated, homology is taken with coefficients in the ground ring.

\section{The Hyperoctahedral Category and Involutive Non-commutative Sets}
\label{hyp-cat-sec}
We recall the definition of the hyperoctahedral groups $H_{n+1}$ for $n\geqslant 0$ and describe the associated category $\HC$, following \cite[Section 3]{FL}. We describe the category of involutive non-commutative sets, $\ifas$ and show that it is isomorphic to $\HC$.

\begin{defn}
For $n\geqslant 0$, the \emph{hyperoctahedral group} $H_{n+1}$ is defined to be the semi-direct product $ H_{n+1} \coloneqq  \cyc^{n+1} \rtimes \Sigma_{n+1}$ where $\Sigma_{n+1}$ acts on $\cyc^{n+1}$ by permuting the factors.
\end{defn}

The group $H_{n+1}$ is sometimes denoted as a wreath product $\Sigma_{n+1}\int \cyc$.
An element of the hyperoctahedral group $H_{n+1}$ is written as a tuple $\left(z_0,\dotsc ,z_n;\sigma\right)$ where each $z_i\in \cyc$ and $\sigma \in \Sigma_{n+1}$. Let $t_i$ denote the element $\left(1,\dotsc ,1,t,1,\dotsc ,1;id_{n}\right)$ in $H_{n+1}$, where $t$ occurs in the $i^{th}$ position for $0\leqslant i\leqslant n$. Let $\theta_j$ denote the element $\left(1,\dotsc ,1;\left( j\; j+1\right)\right)$ in $H_{n+1}$ for $0\leqslant j\leqslant n-1$. Methods for deriving a presentation for a semi-direct product of groups are well-known, see \cite[Chapter 10.2]{johnson-pres} for example. Following these methods one sees that the $H_{n+1}$ is generated by the elements of the form $t_i$ and $\theta_j$ where the $\theta_j$ satisfy the relations of the symmetric group $\Sigma_{n+1}$; the $t_i$ satisfy the relations of the group $C_2^{n+1}$; the $\theta_j$ and $t_i$ commute for $i<j$ and $i>j+1$; and $\theta_i \circ t_{i+1}=t_i\circ \theta_i$ and $\theta_i\circ t_i =t_{i+1}\circ \theta_i$.

The hyperoctahedral groups form a crossed simplicial group \cite[Section 3]{FL}. We recall the associated category $\HC$. Recall that the category $\Delta$ has as objects the sets $[n]=\llb 0,\dotsc , n\rrb$ for $n\geqslant 0$ and order-preserving maps as morphisms. For $n\geqslant 0$, $0\leqslant i\leqslant n+1$ and $0\leqslant j\leqslant n$, let $\delta_i \in \mathrm{Hom}_{\Delta}\left([n],[n+1]\right)$ be the unique order-preserving injection that omits $i\in [n+1]$ and let $\sigma_j\in \mathrm{Hom}_{\Delta}\left([n+1], [n]\right)$ be the unique order-preserving surjection satisfying $\sigma_j^{-1}(j)=\llb j ,j+1\rrb$. Recall that these \emph{face and degeneracy morphisms} generate the category $\Delta$ and are subject to the simplicial relations found in \cite[Appendix B]{Lod}, for example.

\begin{defn}
\label{delta-h-defn}
The category $\HC$ has as objects the sets $[n]=\llb 0,\dotsc , n\rrb$ for $n\geqslant 0$. An element of $\mathrm{Hom}_{\HC}\left([n],[m]\right)$ is a pair $\left(\phi , g\right)$ where $g \in H_{n+1}$ and $\phi\in \mathrm{Hom}_{\Delta}\left([n],[m]\right)$.

For $\left(\phi,g\right)\in \mathrm{Hom}_{\HC}\left([n],[m]\right)$ and $\left(\psi , h\right) \in \mathrm{Hom}_{\HC}\left([m],[l]\right)$ the composite is the pair $\left(\psi\circ h_{\star}(\phi), \phi^{\star}(h)\circ g\right)\in \mathrm{Hom}_{\HC}\left([n],[l]\right)$ where $h_{\star}(\phi)$ and $\phi^{\star}(h)$ are determined by the relations:
\[\left(\theta_k\right)_{\star}(\delta_i) =\delta_{\theta_k(i)}, \quad \left(\theta_k\right)_{\star}(\sigma_j) =\sigma_{\theta_k(j)}, \quad \left(t_k\right)_{\star}(\delta_i)=\delta_i, \quad \left(t_k\right)_{\star}(\sigma_j) =\sigma_{j} \] 
and
\begin{alignat*}{2}
     \begin{aligned}   \delta_i^{\star}\left(\theta_k\right)&=\begin{cases}
  \theta_k & k< i-1,\\
  id_{n} & k=i-1,\\
  id_{n} & k=i,\\
  \theta_{k-1} & k>i,\\
  \end{cases}\\
  \end{aligned}
   & \hskip 6em  &
  \begin{aligned}
   \sigma_j^{\star}(\theta_k)&= \begin{cases}
  \theta_k & k<j-1,\\
  \theta_j\theta_{j-1} & k=j-1,\\
  \theta_j\theta_{j+1} & k=j,\\
  \theta_{k+1} & k>j,\\
  \end{cases}\\ 
  \end{aligned}
  \end{alignat*}
\begin{alignat*}{2}
\begin{aligned}
   \delta_i^{\star}\left(t_k\right)&=\begin{cases}
  t_{k} & k< i,\\
  id_{n} & k=i,\\
  t_{k-1} & k>i,\\
  \end{cases} 
  \end{aligned}
    & \hskip 6em &
  \begin{aligned}
   \sigma_j^{\star}(t_k) &=\begin{cases}
  t_k & k<j,\\
  \theta_k t_{k+1}t_k & k=j,\\
  t_{k+1} & k>j,\\
  \end{cases} 
  \end{aligned}
\end{alignat*}
where the $\delta_i$ and $\sigma_j$ are the face and degeneracy maps of the category $\Delta$ and the $\theta_k$ and $t_k$ are the generators of the hyperoctahedral group.
\end{defn}

The hyperoctahedral category encodes the structure of an associative multiplication with a compatible involution. Intuitively speaking, the hyperoctahedral groups encode a total ordering via the symmetric group component and an involution via the cyclic group component whilst the order-preserving maps encode the multiplication.

The category of \emph{involutive non-commutative sets} encodes the same data by adding structure to the preimages of set maps analogously to the category of non-commutative sets of \cite[A10]{FT} and \cite[1.2]{Pir02}. Recent work of the author \cite{ifas} shows that the categories of involutive monoids and involutive bimonoids in a symmetric monoidal category are equivalent to categories of algebras over PROPs constructed from the category of involutive non-commutative sets. 

We provide an isomorphism between the hyperoctahedral category and the category of involutive non-commutative sets. The category of involutive non-commutative sets first appeared in the author's thesis \cite[Part \RNum{5}]{DMG}, where detailed examples and technical checks can be found.

We will denote the category of involutive, non-commutative sets by $\ifas$. It will have as objects the sets $[n]=\lbrace 0,\dotsc ,n\rbrace$ for $n\geqslant 0$. An element $f\in \mathrm{Hom}_{\ifas}\left([n],[m]\right)$ will be a map of sets such that the preimage of each singleton $i\in [m]$ is a totally ordered set such that each element comes adorned with a superscript label from the group $\cyc$.

\begin{rem}
Henceforth we will say that a morphism in $\ifas$ is a map of sets together with a \emph{labelled, ordered set} for each preimage. In particular, we will use \emph{preimage} to mean preimage of a singleton. We will denote composition in $\ifas$ by $\bullet$ in order to distinguish from the composition of maps of sets. In particular, we use $\circ$ for two morphisms in $\ifas$ if we are referring to the composite of the underlying maps of sets.
\end{rem}

\begin{defn}
We define an action of $\cyc$, which will be denoted by a superscript, on finite, ordered sets with $\cyc$-labels by
\[ \llb j_1^{\alpha_{j_1}}<\cdots <j_r^{\alpha_{j_r}}\rrb^t=\llb j_r^{t\alpha_{j_r}}<\cdots <j_1^{t\alpha_{j_1}}\rrb.\]
That is, we invert the ordering and multiply each label by $t\in \cyc$.
\end{defn}

\begin{defn}
\label{ifas-comp-defn}
Let $f_1\in \mathrm{Hom}_{\ifas}\left([n],[m]\right)$ and $f_2\in \mathrm{Hom}_{\ifas}\left([m],[l]\right)$. We define $f_2\bullet f_1\in \mathrm{Hom}_{\ifas}\left([n],[l]\right)$ to have underlying map of sets $f_2\circ f_1$.
We define the labelled totally ordered set $(f_2\bullet f_1)^{-1}(i)$ to be the ordered disjoint union of labelled, ordered sets
\[ \coprod_{j^{\alpha_j}\in f_2^{-1}(i)} f_1^{-1}\left(j\right)^{\alpha_j}.\]
\end{defn}

\begin{defn}
The \emph{category of involutive, non-commutative sets}, $\ifas$, has as objects the sets $[n]=\lbrace 0,\dotsc ,n\rbrace$ for $n\geqslant 0$. An element of $\mathrm{Hom}_{\ifas}([n],[m])$ is a map of sets with a total ordering on each preimage such that each element of the domain comes adorned with a superscript label from the group $\cyc$. Composition of morphisms is as defined in Definition \ref{ifas-comp-defn}.
\end{defn}

\begin{thm}
There is an isomorphism of categories $\HC \cong \ifas$.
\end{thm}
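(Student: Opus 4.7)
The plan is to define a functor $F\colon \HC \to \ifas$ explicitly on morphisms, exhibit an inverse, and then verify functoriality. On objects $F$ is the identity. On a morphism $(\phi, g)\colon [n]\to[m]$ with $g=(z_0,\ldots,z_n;\sigma)\in H_{n+1}$, I define $F(\phi,g)$ to have underlying set map $\phi\circ\sigma$. Since $\phi$ is order-preserving, the preimage $\phi^{-1}(i)\subseteq [n]$ is an interval, so I endow $(\phi\circ\sigma)^{-1}(i)=\sigma^{-1}(\phi^{-1}(i))$ with the total order pulled back from this interval along $\sigma$, and I decorate each $k\in[n]$ with the label $z_k\in \cyc$.

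To see $F$ is bijective on hom-sets I would construct an explicit inverse $G\colon \ifas\to\HC$. Given an $\ifas$-morphism $f\colon[n]\to[m]$, concatenating the ordered preimages $f^{-1}(0),\ldots,f^{-1}(m)$ produces a new total order on $[n]$; let $\sigma\in\Sigma_{n+1}$ be the unique permutation that converts this new order back to the standard one, let $\phi\in\mathrm{Hom}_\Delta([n],[m])$ be the unique order-preserving map with $|\phi^{-1}(i)|=|f^{-1}(i)|$, and let $z_k$ be the label carried by $k$. Setting $G(f)=(\phi,(z_0,\ldots,z_n;\sigma))$ gives $FG=\mathrm{id}$ and $GF=\mathrm{id}$ by inspection.

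The main step is verifying $F$ respects composition. Because every morphism in $\HC$ factors uniquely as $\phi\cdot g$ with $\phi\in\Delta$ and $g\in H_{n+1}$, and $H_{n+1}$ is generated by the elements $\theta_k$ and $t_k$, functoriality reduces to checking the mixed composites of the face/degeneracy maps $\delta_i,\sigma_j$ with these generators. For each such composite one expands the $\HC$-side using the distributivity relations of Definition \ref{delta-h-defn} and compares with the ordered-disjoint-union formula of Definition \ref{ifas-comp-defn}. The subcases involving only $\delta_i,\sigma_j,\theta_k$ recover the standard isomorphism between the simplicial-symmetric category $\DS$ and non-commutative sets used in \cite{FT, Pir02}, so no new work is needed there.

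The main obstacle is the behaviour of the $\cyc$-labels under composition. Specifically, the relation $\sigma_j^{\star}(t_j)=\theta_j t_{j+1}t_j$ in $\HC$ must correspond precisely to the order reversal and label multiplication that Definition \ref{ifas-comp-defn} dictates when a preimage element carries a non-trivial label. Tracking both sides on the underlying set, on the induced orderings, and on the labels shows that the two operations produce the same data. The remaining $t_k$-cases are analogous finite checks; a detailed case-by-case verification appears in the author's thesis \cite{DMG}.
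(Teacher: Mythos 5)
Your proposal is correct and is essentially the argument the paper has in mind: the paper's own proof merely observes that $\Delta$ and the groups $H_{n+1}$ sit inside $\ifas$ as the order-preserving maps with canonical data and the bijections respectively, and defers the rest to cited references, which carry out exactly your construction --- send $(\phi,g)$ to $\phi\circ\sigma$ with preimage orderings pulled back along $\sigma$ and labels $z_k$, invert by sorting the preimages, and verify functoriality on the generators, the only genuinely new case beyond the symmetric/non-commutative-sets isomorphism being $\sigma_j^{\star}(t_j)=\theta_j t_{j+1}t_j$ against the order-reversing $C_2$-action of Definition \ref{ifas-comp-defn}. The finite case checks you defer are precisely the ones the paper also defers to the author's thesis.
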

\begin{proof}
We note that $\ifas$ contains the morphisms of $\Delta$. These are the order-preserving maps of sets with the canonical total ordering on each preimage with each label being $1\in C_2$. Furthermore, $\ifas$ contains the elements of the hyperoctahedral groups. These are the bijections in $\ifas$. The isomorphism of categories is similar to the proof of \cite[3.11]{ifas}.
\end{proof}

\section{Functor homology}
\label{fun-hom-sec}
We recall some constructions from \emph{functor homology}, or the \emph{homology of small categories}, from \cite{Pir02} and \cite{GZ}.

Let $\C$ be a small category. We define the \emph{category of left $\mathbf{C}$-modules}, denoted $\CM$, to be the functor category $\mathrm{Fun}\left(\mathbf{C},\kmod\right)$. We define the \emph{category of right $\mathbf{C}$-modules}, denoted $\MC$, to be the functor category $\mathrm{Fun}\left(\mathbf{C}^{op},\kmod\right)$. It is well-known that the categories $\CM$ and $\MC$ are abelian with enough projectives and injectives, see for example \cite[Section 1.6]{Pir02}.

\begin{defn}
\label{triv-c-mod}
Let $k^{\star}$ denote the right $\C$-module that is constant at the trivial $k$-module. We will refer to this functor as the \emph{$k$-constant right $\C$-module}.
\end{defn}

\begin{defn}
\label{tensor-obj}
Let $G$ be an object of $\MC$ and $F$ be an object of $\CM$. We define the tensor product $G\otimes_{\C} F$ to be the $k$-module
\[\frac{\bigoplus_{C\in \mathrm{Ob}(\C)} G(C)\otimes_k F(C)}{\llangle G(\alpha)(x)\otimes y - x\otimes F(\alpha)(y)\rrangle}\]
where $\llangle G(\alpha)(x)\otimes y - x\otimes F(\alpha)(y)\rrangle$ is the $k$-submodule generated by the set
\[\llb G(\alpha)(x)\otimes y - x\otimes F(\alpha)(y): \alpha \in \mathrm{Hom}(\C),\,\, x\in src(G(\alpha)),\,\, y\in src(F(\alpha))\rrb.\]
\end{defn}

This quotient module is spanned $k$-linearly by equivalence classes of elementary tensors in $\bigoplus_{C\in \mathrm{Ob}(\C)} G(C)\otimes_k F(C)$ which we will denote by $\left[x\otimes y\right]$. One constructs the tensor product of $\C$-modules as a bifunctor $-\otimes_{\mathbf{C}}-\colon \MC \times \CM\rightarrow \kmod$ on objects by $(G,F)\mapsto G\otimes_{\C} F$. Given two natural transformations $\Theta \in \mathrm{Hom}_{\MC}\left(G, G_1\right)$ and $\Psi\in \mathrm{Hom}_{\CM}\left(F,F_1\right)$, the morphism $\Theta \otimes_{\C} \Psi$ is determined by $\left[x\otimes y\right] \mapsto \left[ \Theta_C(x)\otimes \Psi_C(y)\right]$. It is well-known that the bifunctor $-\otimes_{\C} -$ is right exact with respect to both variables and preserves direct sums, see for example \cite[Section 1.6]{Pir02}. We denote the left derived functors of $-\otimes_{\mathbf{C}}-$ by $\mathrm{Tor}_{\star}^{\mathbf{C}}(-,-)$.

Recall the nerve of $\C$ \cite[B.12]{Lod}. $N_{\star}\C$ is the simplicial set such that $N_n\C$ for $n\geqslant 1$ consists of all strings of composable morphisms of length $n$ in $\C$ and $N_0\C$ is the set of objects in $\C$. The face maps are defined to either compose adjacent morphisms in the string or truncate the string and the degeneracy maps insert identity morphisms into the string. We will denote an element of $N_n\C$ by $\left(f_n , \dotsc , f_1\right)$ where $f_i\in \mathrm{Hom}_{\C}\left(C_{i-1}, C_i\right)$.

For a small category $\C$ and a functor $F\in \CM$ there is a simplicial $k$-module, denoted $C_{\star}\left(\C , F\right)$ due to Gabriel and Zisman \cite[Appendix 2]{GZ} whose $n^{th}$ homology is canonically isomorphic to $\mr{Tor}_{n}^{\C}\left(k^{\star} , F\right)$.

\begin{defn}
\label{GabZisCpx}
Let $F\in \CM$. We define
\[C_n(\C,F)=\bigoplus_{(f_n,\dotsc ,f_1)} F(C_0)\]
where the sum runs through all elements $(f_n,\dotsc ,f_1)$ of $N_n\C$ and $f_i\in \mr{Hom}_{\C}\left(C_{i-1}, C_i\right)$. We write a generator of $C_n(\C,F)$ in the form $(f_n,\dotsc, f_1,x)$ where $(f_n,\dotsc ,f_1)\in N_n\C$ indexes the summand and $x\in F(C_0)$. 
The face maps $\p_i\colon C_n(\C,F)\rightarrow C_{n-1}(\C,F)$ are determined by
\[
\p_i(f_n,\dotsc, f_1,x)=\begin{cases}
(f_n,\dotsc ,f_2, F(f_1)(x)) & i=0,\\
(f_n, \dotsc , f_{i+1}\circ f_i,\dotsc ,f_1,x) & 1\leqslant i \leqslant n-1,\\
(f_{n-1},\dotsc ,f_1, x) & i=n.
\end{cases}\]
The degeneracy maps insert identity maps into the string.
By abuse of notation we will also denote the associated chain complex by $C_{\star}\left(\C , F\right)$. The homology of the associated chain complex will be denoted $H_{\star}\left(\C , F\right)$.
\end{defn}

\begin{rem}
\label{HSC-hocolim-rem}
This construction is analogous to the Bousfield-Kan construction \cite[\RNum{12}.2.1]{BK} for the homotopy colimit. This leads to an alternative choice of notation. We can denote the simplicial $k$-module $C_{\star}\left(\C , F\right)$ by
\[\underset{\C}{\mr{hocolim}}\, F.\]

This leads to an alternative description of the $k$-modules $\mathrm{Tor}_{\star}^{\C}\left(k^{\star} , F\right)$ in terms of \emph{derived colimits}. One can make the definition $\mathrm{colim}^{\C} (F)\coloneqq k\otimes_{\C} F$ following \cite[C.10]{Lod}. One then denotes the left derived functors by $\mathrm{colim}_i^{\C}(F)$.  We will make of use of this alternative notation in Section \ref{group-hom-sec}.
\end{rem}

There is an isomorphic variant of this chain complex constructed using the nerve of the under-category. For details of the under-category see \cite[\RNum{2}.6]{CWM} for instance. Let $\left(-\downarrow \C\right)\colon \C^{op}\rightarrow \mathbf{Cat}$ be the functor that sends an object $C\in \C$ to the under-category $\left(C\downarrow \C\right)$. For a morphism $f\in \mr{Hom}_{\C}\left(C , C^{\prime}\right)$, the functor $\left(f\downarrow \C\right) \colon \left(C^{\prime}\downarrow \C\right) \rightarrow \left(C\downarrow \C\right)$ is determined by precomposition with $f$. Let $k[-]\colon \mathbf{Set}\rightarrow \kmod$ denote the free $k$-module functor and, by abuse of notation, its extension to simplicial sets.

\begin{defn}
\label{nerve-complex}
Let $F \in \CM$. The chain complex $k\left[N_{\star}\left(-\downarrow \C\right)\right]\otimes_{\C} F$ has the $k$-module $k\left[N_{n}\left(-\downarrow \C\right)\right]\otimes_{\C} F$ in degree $n$ with the boundary map induced from the alternating sum of the face maps in the nerve. A generator in degree $n$ is an equivalence class $\left[\left( f_n,\dotsc , f_0\right) \otimes x\right]$ where $f_0\in \mathrm{Hom}_{\C}\left(C , C_0\right)$, $f_i\in \mathrm{Hom}_{\C}\left(C_{i-1}, C_i\right)$ for $i\geqslant 1$ and $x\in F(C)$.
\end{defn}

The isomorphism of chain complexes $k\left[N_{\star}\left(-\downarrow \C\right)\right]\otimes_{\C} F \cong C_{\star}\left( \C , F\right)$ for each $F\in \CM$ is determined by $\left[\left(f_n,\dotsc , f_0\right) \otimes x\right]\mapsto (f_n,\dotsc , f_1, F(f_0)(x))$ on generators in degree $n$. We will make use of both variants of this chain complex.

\section{Hyperoctahedral Homology} 
\label{hyp-hom-sec}
We define hyperoctahedral homology for involutive $k$-algebras in terms of functor homology. We describe the hyperoctahedral homology of the commutative ground ring, which is understood to have the trivial involution. We describe the relationship between hyperoctahedral homology and other homology theories arising from crossed simplicial groups. We prove a universal coefficient theorem for hyperoctahedral homology.

\begin{defn}
An \emph{involution} on an associative $k$-algebra $A$ is an anti-homomorphism of algebras $A\rightarrow A$, which we will denote by $a \mapsto \ol{a}$, which squares to the identity. A $k$-algebra equipped with an involution is called \emph{involutive}.
\end{defn}

\subsection{Hyperoctahedral bar construction}
We define the \emph{hyperoctahedral bar construction} following Fiedorowicz \cite[Section 2]{Fie}. Intuitively, the hyperoctahedral bar construction for an involutive $k$-algebra $A$ sends the object $[n]$ of $\HC$ to the tensor power $A^{\otimes n+1}$ and acts on a morphism $(\phi,g)\in\mr{Hom}_{\HC}\left([n],[m]\right)$ by sending it to the morphism of $k$-modules that permutes the factors of the tensor according to the underlying permutation of $g$, applies the involution according the labels of $g$ and multiplies tensor factors, or inserts identities, according to the order-preserving map $\phi$.

\begin{defn}
Let $A$ be an involutive, associative $k$-algebra with the involution denoted by $a \mapsto \ol{a}$. The \emph{hyperoctahedral bar construction} is the functor $\HB\colon \HC\rightarrow \kmod$
given on objects by $[n]\mapsto A^{\otimes n+1}$. Let $(\phi ,g )\in \mr{Hom}_{\HC}\left([n],[m]\right)$. Then $\phi \in \mr{Hom}_{\Delta}\left([n] , [m]\right)$ and $g=\left( z_0,\dotsc , z_n; \sigma\right) \in H_{n+1}$. We define
\[\HB\left(\phi ,g\right)(a_0\otimes \cdots \otimes a_n)=\left( \underset{_{i\in (\phi\circ \sigma)^{-1}(0)}}{{\prod}^{<}} a_i^{z_i}\right)\otimes \cdots \otimes \left(\underset{_{i\in (\phi\circ \sigma)^{-1}(m)}}{{\prod}^{<}} a_i^{z_i} \right)\]
on elementary tensors and extend $k$-linearly, where the product is ordered according to the map $\phi$ and
\[a^{z_i}=
\begin{cases}
a & z_i=1\\
\ol{a} & z_i=t.
\end{cases}\]
Note that an empty product is defined to be the multiplicative unit $1_A$.
\end{defn}

\begin{rem}
We note that the notation for the hyperoctahedral bar construction presented here differs from the notation $B_A^{oct}$ used in Fiedorowicz's preprint \cite{Fie} and the author's thesis \cite{DMG}. We hope the reader will agree that our choice of notation offers equal clarity ($\mathsf{H}$ for ``hyperoctahedral bar construction" with a subscript $A$ to indicate the algebra under consideration) whilst making the typesetting for the remainder of the paper considerably more pleasant. Proofs that the hyperoctahedral bar construction is well-defined can be found in \cite[Lemma 2.1]{Fie} or \cite[Appendix B]{DMG}.
\end{rem}

\subsection{Hyperoctahedral homology}
Recall the $k$-constant right $\HC$-module $k^{\star}$ from Definition \ref{triv-c-mod}.

\begin{defn}
Let $A$ be an involutive, associative algebra. For $n\geqslant 0$, we define the \emph{$n^{th}$ hyperoctahedral homology of $A$} to be
\[HO_{n}(A)\coloneqq\mr{Tor}_{n}^{\HC}\left(k^{\star},\mathsf{H}_A\right).\]
\end{defn}

\begin{prop}
The hyperoctahedral homology of the ground ring $k$, where $k$ has the trivial involution, is isomorphic to $k$ concentrated in degree zero.
\end{prop}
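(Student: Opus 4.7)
The plan is to reduce the Tor computation to the homology of the nerve of $\HC$ and then argue this homology is trivial except in degree zero.

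First, I would identify the hyperoctahedral bar construction $\mathsf{H}_k$ (for $k$ with trivial involution) with the $k$-constant left $\HC$-module $k^{\star}$. The multiplication maps $\mu_n\colon k^{\otimes n+1}\to k$, $a_0\otimes\cdots\otimes a_n\mapsto a_0\cdots a_n$, are $k$-module isomorphisms, and a direct check from the defining formula shows that every structure map $\mathsf{H}_k(\phi,g)$ becomes the identity of $k$ under the $\mu_n$-identifications: triviality of the involution gives $a^{z_i}=a$, and regardless of the order-preserving map $\phi$ or the permutation $\sigma$, the product of all slot products equals $a_0\cdots a_n$. Hence $\mu_{\bullet}\colon\mathsf{H}_k\xrightarrow{\cong}k^{\star}$ as left $\HC$-modules.

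With this identification, $HO_n(k)=\mr{Tor}^{\HC}_n(k^{\star},k^{\star})$. Applying the Gabriel-Zisman complex from Definition \ref{GabZisCpx} with $F=k^{\star}$ collapses each degree-$n$ summand to a copy of $k$ indexed by $N_n\HC$, and the boundary becomes the alternating sum of the nerve face maps. Hence $HO_{\star}(k)\cong H_{\star}(|N\HC|;k)$.

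The remaining task is to show this is $k$ concentrated in degree zero. Connectedness of $\HC$ yields $H_0=k$ immediately. For the vanishing in positive degrees, I would exploit the crossed simplicial group structure: the simplicial group $H_{\bullet}$ admits an extra degeneracy (inserting a trivial generator at the initial slot and shifting the remaining indices up), so its geometric realisation is contractible, and the standard identification of $|N\HC|$ with the classifying space of $|H_{\bullet}|$ for the crossed simplicial group $H_{\bullet}$ then gives $|N\HC|\simeq\text{pt}$. The main obstacle is that this classifying-space identification is not developed within Section \ref{hyp-hom-sec}; a self-contained alternative is to lift the extra degeneracy directly to a contracting chain homotopy on $k[N_{\bullet}\HC]$, giving an explicit null-homotopy in positive degrees.
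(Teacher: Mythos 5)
Your main line of argument is essentially the paper's: you collapse the standard complex to $k\left[N_{\star}\HC\right]$ (the paper does this directly on generators of $C_{\star}\left(\HC,\mathsf{H}_k\right)$ rather than via the module identification $\mathsf{H}_k\cong k^{\star}$, but it is the same observation), and you then deduce contractibility of $N_{\star}\HC$ from contractibility of the hyperoctahedral crossed simplicial group together with the Fiedorowicz--Loday comparison between $B\HC$ and the realization of $\lbrace H_{n+1}\rbrace$; the paper invokes exactly this input, in the form $\Omega B\HC \simeq \lvert\lbrace H_{n+1}\rbrace\rvert$ (their Proposition 5.8) followed by the path-space fibration. The one caution concerns your proposed ``self-contained alternative'': the contracting homotopy (extra degeneracy, given by the inclusions $H_i\rightarrow H_{i+1}$) lives on the simplicial set $\lbrace H_{n+1}\rbrace$, whose $n$-simplices are single group elements, whereas a contracting chain homotopy on $k\left[N_{\star}\HC\right]$ must act on strings of composable morphisms of $\HC$; there is no direct ``lift'' of one to the other, and transporting the contraction from the crossed simplicial group to the nerve of the category is precisely the content of the Fiedorowicz--Loday comparison you were hoping to bypass (note that $\HC$ has no initial or terminal object, and for the cyclic category the analogous nerve has the homotopy type of $BS^1$, so the homotopy type genuinely depends on the realization of the crossed simplicial group rather than on any formal feature of the nerve). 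So the argument is correct as long as you keep the citation to Fiedorowicz--Loday for that step rather than relying on the fallback.
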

\begin{proof}
The key is to prove that the nerve of the category $\HC$ is contractible. By \cite[Example 6]{FL}, the hyperoctahedral crossed simplicial group $\llb H_{n+1}\rrb$ is contractible as a simplicial set with the inclusion maps $H_i\rightarrow H_{i+1}$ forming a contracting homotopy. By \cite[Proposition 5.8]{FL} it follows that $\Omega B\HC$, the based loop space on the classifying space of $\HC$, is a contractible space. By considering the long exact sequence in homotopy associated to the path space fibration we deduce that $B\HC$ is a contractible space. By definition the classifying space $B\HC$ is the geometric realization of the nerve $N_{\star}\HC$. We deduce that $N_{\star}\HC$ is contractible and, in particular, its homology is isomorphic to $k$ concentrated in degree zero.

The chain complex $C_{\star}\left(\HC, \mathsf{H}_k\right)$, defined following Definition \ref{GabZisCpx} is generated $k$-linearly in degree $p$ by elements of the form $\left(f_p,\dotsc , f_1, 1_k\otimes \cdots \otimes 1_k\right)$ where $f_i\in \mathrm{Hom}_{\HC}\left([x_{i-1}],[x_i]\right)$. 

We observe that there is an isomorphism of chain complexes $C_{\star}\left( \HC,\mathsf{H}_k\right)\rightarrow k\left[N_{\star}(\HC)\right]$ determined on generators in degree $p$ by
$\left(f_p,\dotsc , f_1, 1_k\otimes \cdots \otimes 1_k\right)\mapsto \left(f_p,\dotsc , f_1\right)$, from which the result follows.
\end{proof}

\subsection{Relationship to dihedral homology}
Dihedral homology for algebras with involution was first studied independently by Loday \cite{Lod-dihed} and Krasauskas, Lapin and Solov'ev \cite{KLS-dihed}. It is the homology theory associated to the dihedral crossed simplicial group \cite[Example 5]{FL} and is related to $O(2)$-equivariant homology \cite[Theorem 3.3.3]{Lodder1} and Hermitian algebraic $K$-theory via a dihedral Chern character \cite[Section 4]{KLS-dihed}. 

The category $\Delta D$ associated to the dihedral crossed simplicial group is self-dual, that is $\Delta D\cong \Delta D^{op}$, see for example \cite[Proposition 1.4]{Dunn}. Let $A$ be an involutive $k$-algebra. One observes, similarly to \cite[Lemma 2.2]{Fie}, that the composite
\[ \Delta D^{op}\xra{\cong} \Delta D\hookrightarrow \HC \xra{\HB} \kmod\]
is the dihedral bar construction $A^{\dagger}$ of \cite[Definition 1.4.1]{Lodder1}.

Let $\mathsf{D}_A$ denote the restriction of $\HB$ to the subcategory $\Delta D$. Recall the $k$-constant right $\C$-module from Definition \ref{triv-c-mod}. By abuse of notation we will use $k^{\star}$ to denote both the $k$-constant right $\Delta D^{op}$-module and the $k$-constant right $\Delta D$-module. We observe that
\[HD_{\star}(A)=\mr{Tor}_{\star}^{\Delta D^{op}}\left(k^{\star}, A^{\dagger}\right)\cong \mr{Tor}_{\star}^{\Delta D}\left(k^{\star} , \mathsf{D}_A\right).\]

It follows that for an involutive $k$-algebra $A$, there is a natural map $HD_{\star}\left(A\right) \rightarrow HO_{\star}\left(A\right)$.

We can obtain similar maps for symmetric homology and cyclic homology. For an algebra with involution $A$, the restriction of the hyperoctahedral bar construction $\HB$ to the category $\DS$ associated to the symmetric crossed simplicial group is $\SB$, the symmetric bar construction of \cite[Definition 13]{Ault}. We therefore have a natural map $HS_{\star}\left(A\right) \rightarrow HO_{\star}\left(A\right)$, constructed similarly to the dihedral case. Pre-composing with the natural map $HC_{\star}\left(A\right)\rightarrow HS_{\star}\left(A\right)$ of \cite[Section 12]{Ault} yields, for an involutive $k$-algebra, a natural map $HC_{\star}\left(A\right)\rightarrow HO_{\star}\left(A\right)$.

\subsection{Universal coefficient theorem}
We show that hyperoctahedral homology satisfies a universal coefficient theorem analogous to that satisfied by symmetric homology \cite[Theorem 21]{Ault}. 

\begin{defn}
\label{coeffs-defn}
Let $A$ be an involutive $k$-algebra and let $M$ be a $k$-module. Let $d$ denote the boundary map of $C_{\star}\left(\HC , \HB\right)$. We define the \emph{hyperoctahedral homology of $A$ with coefficients in $M$}, denoted $HO_{\star}\left(A;M\right)$, to be the homology of the chain complex $C_{\star}\left(\HC , \HB\right) \otimes M$ with boundary map $d\otimes id_M$. 
\end{defn}

Let $A$ be an involutive $k$-algebra. One observes that for each $n\geqslant 0$, $HO_{n}\left(A;-\right)$ is a covariant endofunctor on the category of $k$-modules. Explicitly, let $M$ be a $k$-module thought of as a chain complex concentrated in degree zero. The functor $HO_{n}\left(A;-\right)$ is defined by first taking the tensor product of chain complexes $C_{\star}\left(\HC , \HB\right)\otimes -$ and then taking the $n^{th}$ homology. Since the tensor product of $k$-modules and taking homology both commute with direct sums we see that the functors $HO_{n}\left(A;-\right)$ are additive. A standard argument shows that if $A$ is flat over $k$, given a short exact sequence of $k$-modules, the functors $HO_{n}\left(A;-\right)$ give rise to a long exact sequence in homology.

\begin{thm}
\label{UCT}
Let $A$ be a flat, involutive $k$-algebra and let $M$ be a $k$-module. There is a spectral sequence with
\[E_2^{p,q}\cong \mr{Tor}_p^k\left(HO_q\left(A\right), M\right)\]
converging to $HO_{\star}\left(A;M\right)$.
\end{thm}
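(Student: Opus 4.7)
The plan is to build a first-quadrant double complex, take the two spectral sequences associated to its row and column filtrations, and show that one collapses to the total homology $HO_\star(A;M)$ while the other has the desired $\mr{Tor}$ groups on its $E_2$ page.

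I would start by choosing a flat (for instance, projective) resolution $P_\bullet \to M$ of $M$ as a $k$-module, and then form the double complex
\[X_{p,q}=C_p\left(\HC,\HB\right)\otimes_k P_q\]
with horizontal differential induced from the Gabriel--Zisman differential of Definition \ref{GabZisCpx} and vertical differential induced from the resolution (with the usual sign convention). The key flatness observation is that $C_p\left(\HC,\HB\right)$ is flat over $k$: by definition it is a direct sum of copies of $A^{\otimes x_0 +1}$ indexed by $p$-simplices of the nerve, and since $A$ is flat each such tensor power is flat, and direct sums of flat modules are flat. Thus every row $X_{\star,q}$ consists of flat modules and every column $X_{p,\star}$ consists of flat modules.

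Next I would run the spectral sequence obtained by taking vertical homology first. Since each $C_p\left(\HC,\HB\right)$ is flat, tensoring with it is exact, so
\[{}^{\mathrm{I}}\!E_1^{p,q}=H_q\left(C_p\left(\HC,\HB\right)\otimes P_\bullet\right)\cong C_p\left(\HC,\HB\right)\otimes H_q(P_\bullet),\]
which is $C_p\left(\HC,\HB\right)\otimes M$ in the row $q=0$ and vanishes otherwise. The ${}^{\mathrm{I}}\!E_2$ page is therefore concentrated in $q=0$ and equal to $HO_p(A;M)$ by Definition \ref{coeffs-defn}. This spectral sequence collapses at $E_2$ and identifies the total homology of $X_{\star,\star}$ with $HO_\star(A;M)$.

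Running the other spectral sequence by taking horizontal homology first, flatness of $P_q$ lets us interchange homology with the tensor product, giving
\[{}^{\mathrm{II}}\!E_1^{p,q}=H_p\left(C_\star\left(\HC,\HB\right)\otimes P_q\right)\cong HO_p(A)\otimes P_q,\]
whence
\[{}^{\mathrm{II}}\!E_2^{p,q}\cong \mr{Tor}_q^k\left(HO_p(A),M\right).\]
After reindexing to match the statement, this yields the claimed spectral sequence converging to $HO_\star(A;M)$. The main technical point to get right is the flatness of $C_p\left(\HC,\HB\right)$ (so that the first spectral sequence collapses cleanly), but this is an immediate consequence of the flatness of $A$ and the direct-sum form of the Gabriel--Zisman complex; everything else is the standard double complex argument, and convergence follows from the first-quadrant shape.
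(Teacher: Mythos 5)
Your argument is correct, but it takes a genuinely different route from the paper. The paper's proof is a one-paragraph appeal to a general ``universal coefficient spectral sequence'' from the literature, for which it only verifies the two hypotheses: that $HO_q(A;-)$ vanishes for $q<0$ and that these functors commute with arbitrary direct sums (both immediate from Definition \ref{coeffs-defn}). You instead construct the spectral sequence by hand from the first-quadrant double complex $C_p\left(\HC,\HB\right)\otimes_k P_q$, and your two key observations are exactly the right ones: flatness of $A$ makes each $C_p\left(\HC,\HB\right)$ a direct sum of flat modules $A^{\otimes x_0+1}$, so the column-filtration spectral sequence degenerates onto $HO_\star(A;M)$, while flatness of the $P_q$ lets you commute $-\otimes_k P_q$ past homology to identify the other $E_2$ page with $\mr{Tor}_\star^k\left(HO_\star(A),M\right)$. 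Both approaches use the flatness hypothesis in the same essential place (it is what guarantees the rows/columns of the double complex are flat, or equivalently that the cited black-box theorem applies), and both establish only existence, not naturality or a description of the differentials. What your version buys is self-containedness and transparency about where flatness enters; what the paper's version buys is brevity and the fact that the cited theorem's hypotheses (additivity, boundedness) are checked once and reused. Two minor points worth making explicit if you write this up: state that a first-quadrant double complex has both associated spectral sequences converging to the homology of the total complex, and note that $\mr{Tor}$ may be computed from a flat resolution of either variable so that $H_q\left(HO_p(A)\otimes P_\bullet\right)\cong\mr{Tor}_q^k\left(HO_p(A),M\right)$ as you claim.
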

\begin{proof}
This is the ``universal coefficient spectral sequence" of \cite[Theorem 12.11]{SS}. We note that the boundedness condition is satisfied since $HO_q\left(A;-\right)=0$ for $q< 0$. The tensor product of $k$-modules commutes with arbitrary direct sums as does taking homology and so the functors $HO_q\left(A;-\right)$ commute with arbitrary direct sums.
\end{proof}

We recall some definitions from homological algebra \cite[4.1, 4.2]{weib}. Recall that the \emph{weak global dimension} of a commutative ring $k$ is the largest integer $n$ such that $\mathrm{Tor}_n^k\left(M,N\right)$ is non-zero for some $k$-modules $M$ and $N$. We will denote it $wgd(k)$. Furthermore, recall that a commutative ring $k$ is said to be \emph{hereditary} if every ideal is projective.

\begin{cor}
If the ground ring $k$ satisfies $wgd(k)\leqslant 1$ then the spectral sequence of Theorem \ref{UCT} reduces to short exact sequences
\[0\rightarrow HO_n(A)\otimes M \rightarrow HO_n\left(A;M\right) \rightarrow \mathrm{Tor}_1^k\left(HO_{n-1}\left(A\right), M\right)\rightarrow 0.\]
Furthermore, if $k$ is hereditary and $A$ is projective these short exact sequences split, although not naturally.
\end{cor}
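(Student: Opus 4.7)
The plan is to exploit the vanishing of high $\mathrm{Tor}^k$ groups to force collapse of the spectral sequence of Theorem \ref{UCT}, and then invoke the classical universal coefficient theorem for chain complexes of projectives to obtain the splitting.

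\textbf{First step: collapse of the spectral sequence.} If $wgd(k)\leqslant 1$ then $\mathrm{Tor}_p^k\left(N,M\right)=0$ for every $p\geqslant 2$ and all $k$-modules $N, M$. In particular, $E_2^{p,q}=0$ whenever $p\geqslant 2$, so the only possibly nonzero columns of the $E_2$-page are $p=0$ and $p=1$. Every higher differential $d_r\colon E_r^{p,q}\rightarrow E_r^{p-r,q+r-1}$ for $r\geqslant 2$ either starts from or lands in a zero column, so $E_2=E_\infty$.

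\textbf{Second step: extracting the short exact sequence.} The spectral sequence converges to $HO_\star\left(A;M\right)$, so for each $n$ there is a filtration of $HO_n\left(A;M\right)$ whose only nonzero associated graded pieces are $E_\infty^{0,n}\cong HO_n(A)\otimes M$ and $E_\infty^{1,n-1}\cong \mathrm{Tor}_1^k\left(HO_{n-1}(A),M\right)$. A two-term filtration yields precisely the short exact sequence
\[0\rightarrow HO_n(A)\otimes M \rightarrow HO_n\left(A;M\right) \rightarrow \mathrm{Tor}_1^k\left(HO_{n-1}\left(A\right), M\right)\rightarrow 0.\]

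\textbf{Third step: splitting when $k$ is hereditary and $A$ is projective.} Here I would bypass the spectral sequence and argue directly at the level of chain complexes, since the splitting statement is the classical universal coefficient theorem for chain complexes of projective modules (see for example \cite[Theorem 3.6.2]{weib}). If $A$ is projective over $k$, then each $A^{\otimes n+1}$ is projective, and hence each $C_n\left(\HC,\HB\right)$, being a direct sum of such tensor powers, is projective. Over a hereditary ring, submodules of projectives are projective, so the cycles $Z_n$ and boundaries $B_n$ in $C_\star\left(\HC,\HB\right)$ are projective. The short exact sequence $0\rightarrow Z_n\rightarrow C_n\rightarrow B_{n-1}\rightarrow 0$ then splits, and the standard diagram chase (as in the proof of the topological UCT) yields a non-natural splitting of the short exact sequence produced in the second step.

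\textbf{Main obstacle.} The only delicate point is the splitting claim: one must verify that the hypotheses genuinely put us in the classical UCT setting. This amounts to checking that $C_\star\left(\HC,\HB\right)$ is a chain complex of projective $k$-modules and that hereditariness of $k$ forces the cycles to be projective as well. Both are routine, so the corollary reduces to invocations of well-known homological algebra.
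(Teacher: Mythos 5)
Your proof is correct and follows essentially the same route as the paper: the paper likewise notes that $wgd(k)\leqslant 1$ kills $E_2^{p,q}$ for $p>1$ and then cites Dold for the two-column degeneration into short exact sequences, and cites Rotman's universal coefficient theorem for the hereditary/projective splitting. Your argument simply unpacks those two citations explicitly (the collapse of differentials and the projectivity of cycles over a hereditary ring), which is fine.
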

\begin{proof}
By definition, if $wgd(k)\leqslant 1$ then $\mr{Tor}_p^k\left(HO_q\left(A\right), M\right)=0$ for all $p>1$. The corollary now follows from \cite[Korollar 2.13]{Dold}. If $k$ is hereditary and $A$ is projective the splitting follows from \cite[Corollary 7.56]{Rotman}.
\end{proof}

\section{Hyperoctahedral Homology of an Augmented Algebra}
\label{aug-alg-sec}
We prove that in the case of an augmented involutive $k$-algebra there exists a smaller chain complex that computes hyperoctahedral homology. We will do this in two stages. We will identify a subcomplex of the standard complex which can be used to calculate the hyperoctahedral homology of an augmented, involutive $k$-algebra up to a copy of $k$ in degree zero. This subcomplex contains only those elements of $A$ that lie in the augmentation ideal. We will then introduce the epimorphism construction for hyperoctahedral homology. Using this construction we reduce the complex further, showing that we need only use the epimorphisms in $\HC$ to calculate the hyperoctahedral homology. The material in this section first appeared in the author's thesis \cite[Part \RNum{6}]{DMG} and was inspired by the analogous construction for symmetric homology \cite[Section 6]{Ault}. Throughout this section we will denote an element of $N_p\left([x]\downarrow \HC\right)$ by $\left(f_p,\dotsc , f_1, f_0\right)$. In particular, $f_0=\left(\phi , g\right)\in \mathrm{Hom}_{\HC}\left([x],[x_0]\right)$ and $f_i=\left(\phi_i,g_i\right)\in \mathrm{Hom}_{\HC}\left([x_{i-1}],[x_i]\right)$ for $i\geqslant 1$.

\subsection{Reduced hyperoctahedral homology}
\label{red-HO}

An associative $k$-algebra $A$ is said to be \emph{augmented} if it is equipped with a $k$-algebra homomorphism $\varepsilon\colon A\rightarrow k$. The \emph{augmentation ideal} $I$ of $A$ is defined to be $\mathrm{Ker} (\ve)$. Note that the structure of an augmented algebra is compatible with an involution. In particular, the augmentation ideal is closed under involution. Recall from \cite[Section 1.1.1]{LodVal} that for an augmented $k$-algebra there is an isomorphism of $k$-modules $A\cong I\oplus k$. It follows that every element $a\in A$ can be written uniquely in the form $y+\lambda$ where $y\in I$ and $\lambda \in k$.

\begin{defn}
\label{basic-tens-defn}
Let $A$ be an augmented $k$-algebra with augmentation ideal $I$. A \emph{basic tensor} in $A^{\otimes n}$ is an elementary tensor $a_1\otimes \cdots \otimes a_n$ such that either $a_i\in I$ or $a_i=1_k$ for each $1\leqslant i\leqslant n$. A tensor factor $a_i$ is called \emph{trivial} if $a_i=1_k$ and is called \emph{non-trivial} if $a_i\in I$.
\end{defn}

Note that for an augmented $k$-algebra $A$, the $k$-module $A^{\otimes n}$ is generated by all finite $k$-linear combinations of its basic tensors.

Henceforth in this section $A$ will denote an augmented, involutive $k$-algebra with augmentation ideal $I$. Consider the chain complex $k\left[N_{\star}\left(-\downarrow \HC\right)\right]\otimes_{\HC} \HB$ defined analogously to Definition \ref{nerve-complex}, whose homology is the hyperoctahedral homology of $A$. By analysing the quotient module one can show that $k\left[N_{\star}\left(-\downarrow \HC\right)\right]\otimes_{\HC} \HB$ is generated $k$-linearly in degree $n$ by those equivalence classes of the form $\left[\left(f_n,\dotsc ,f_1, f_0\right)\otimes Y_x\right]$ such that $Y_x$ is a basic tensor containing no trivial factors and those equivalence classes of the form $\left[\left(f_n,\dotsc ,f_1, f_0\right)\otimes 1_k\right]$. Note that for this last set of equivalence classes the domain of $f_0$ is the set $[0]$. We observe that the boundary map of $k\left[N_{\star}\left(-\downarrow \HC\right)\right]\otimes_{\HC} \HB$ preserves these two types of equivalence classes. We can therefore make the following definition.

\begin{defn}
\label{complex-defn}
Let $C_{\star}\left(\HC , I\right)$ denote the subcomplex of $k\left[N_{\star}\left(-\downarrow \HC\right)\right]\otimes_{\HC} \HB$ generated $k$-linearly in degree $n$ by all equivalence classes $\left[\left(f_n,\dotsc ,f_1, f_0\right)\otimes Y_x\right]$
such that $Y_x$ is a basic tensor containing no trivial factors. We denote the homology of this subcomplex by $H_{\star}\left(\HC ,I\right)$. Let $C_{\star}\left(\HC , k\right)$ denote the subcomplex of $k\left[N_{\star}\left(-\downarrow \HC\right)\right]\otimes_{\HC} \HB$ generated $k$-linearly in degree $n$ by all equivalence classes of the form $\left[\left(f_n,\dotsc ,f_1, f_0\right)\otimes 1_k\right]$.
\end{defn} 

\begin{prop}
Let $A$ be an involutive augmented $k$-algebra with augmentation ideal $I$. For each $n\geqslant 1$ there is an isomorphism of $k$-modules $HO_{n}\left(A\right)\cong H_n\left(\HC , I\right)$. Furthermore, there is an isomorphism of $k$-modules $HO_0\left(A\right)\cong H_0\left(\HC , I\right) \oplus k$.
\end{prop}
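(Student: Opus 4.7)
The plan is to establish that the complex $k\lb N_{\star}(-\downarrow \HC)\rb\otimes_{\HC}\HB$ splits as the internal direct sum of the subcomplexes $C_\star(\HC,I)$ and $C_\star(\HC,k)$, then to identify $C_\star(\HC,k)$ with the complex $k\lb N_{\star}(-\downarrow \HC)\rb\otimes_{\HC}\mathsf{H}_k$ computing the hyperoctahedral homology of the ground ring $k$ (with trivial involution), and finally to invoke the earlier calculation that $HO_\star(k)$ is isomorphic to $k$ concentrated in degree zero.

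For the direct sum decomposition, the generation of the ambient complex by the two types of classes is asserted in the text preceding Definition \ref{complex-defn}. The underlying mechanism is the defining tensor relation
\[(f_n,\dotsc,f_0\circ \delta_i)\otimes Y' \sim (f_n,\dotsc,f_0)\otimes \HB(\delta_i)(Y'),\]
which rewrites any class whose basic tensor has a trivial slot at position $i$ as one with the slot deleted and $f_0$ post-composed with $\delta_i$. Iteratively stripping trivial factors terminates either in a class whose basic tensor has no trivial factors (a generator of $C_\star(\HC,I)$) or, if everything collapses, in $\lb(f_n,\dotsc,f_0)\otimes 1_k\rb$ with $f_0$ emanating from $[0]$ (a generator of $C_\star(\HC,k)$). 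Directness of the sum follows from the $k$-module splitting $A\cong I \oplus k\cdot 1_A$ induced by the augmentation $\ve$ and the unit $\iota\colon k\rightarrow A$: the two types of reduced generators occupy disjoint summands of the tensor power decomposition of $A^{\otimes x+1}$. Combined with the fact, already noted, that the boundary preserves each subcomplex (since the face operators act only on the chain $(f_n,\dotsc,f_0)$ and leave the tensor factor intact), passing to homology yields
\[HO_\star(A) \cong H_\star(\HC,I) \oplus H_\star\bigl(C_\star(\HC,k)\bigr).\]

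It remains to compute $H_\star(C_\star(\HC,k))$. The unit $\iota$ is an involutive algebra homomorphism and induces a natural transformation $\mathsf{H}_k \rightarrow \HB$; composing with the augmentation-induced retraction $\HB\rightarrow \mathsf{H}_k$ is the identity, so the induced chain map on $k\lb N_{\star}(-\downarrow \HC)\rb\otimes_{\HC}(-)$ is a split monomorphism. A generator $\lb(f_n,\dotsc,f_0)\otimes 1^{\otimes y+1}\rb$ of $k\lb N_{\star}(-\downarrow \HC)\rb\otimes_{\HC}\mathsf{H}_k$ maps to $\lb(f_n,\dotsc,f_0)\otimes 1_A^{\otimes y+1}\rb$, which the tensor relation applied with any $\alpha\colon [0]\rightarrow [y]$ rewrites as a class with $f_0$ starting at $[0]$ and tensor slot $1_k$; this image is precisely the set of generators of $C_\star(\HC,k)$, so the map is a chain isomorphism. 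By the earlier calculation of $HO_\star(k)$ one obtains $H_0(C_\star(\HC,k))\cong k$ and $H_n(C_\star(\HC,k))=0$ for $n\geq 1$, and substituting into the decomposition delivers both cases of the proposition. The main obstacle will be verifying directness of the decomposition and the chain-level identification of $C_\star(\HC,k)$ with the ground-ring complex, both of which reduce to careful bookkeeping of how the tensor relations interact with the augmentation splitting of $A$.
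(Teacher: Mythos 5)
Your argument is correct, and its first half (the splitting of $k\left[N_{\star}\left(-\downarrow \HC\right)\right]\otimes_{\HC} \HB$ into $C_{\star}\left(\HC , I\right)\oplus C_{\star}\left(\HC , k\right)$ via the rewriting of trivial tensor slots and the projection coming from $A\cong I\oplus k$) is exactly the paper's. Where you genuinely diverge is in computing $H_{\star}\left(C_{\star}\left(\HC , k\right)\right)$: the paper does this directly, by exhibiting an extra degeneracy $h_n\left[\left(f_n,\dotsc ,f_1,f\right)\otimes 1_k\right]=\left[\left(f_n,\dotsc ,f_1,f,id_0\right)\otimes 1_k\right]$ together with an augmentation to $k$, and citing the standard ``simplicial object with extra degeneracy is aspherical'' result, so that part of the proof is self-contained at the chain level. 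You instead observe that the unit $\iota\colon k\to A$ and the augmentation $\varepsilon\colon A\to k$ are maps of involutive algebras with $\varepsilon\circ\iota=id$, so $\iota$ induces a split monomorphism of complexes $k\left[N_{\star}\left(-\downarrow \HC\right)\right]\otimes_{\HC} \mathsf{H}_k\to k\left[N_{\star}\left(-\downarrow \HC\right)\right]\otimes_{\HC} \HB$ whose image, after rewriting $1_A^{\otimes y+1}$ along any morphism $[0]\to [y]$, is precisely $C_{\star}\left(\HC , k\right)$; you then quote the earlier proposition that $HO_{\star}(k)\cong k$ concentrated in degree zero. This identification is a nice structural explanation of what $C_{\star}\left(\HC , k\right)$ is, and it buys economy by reusing the ground-ring computation; the trade-off is that your route is no longer self-contained, since that earlier proposition rests on the contractibility of $B\HC$ via the Fiedorowicz--Loday results, whereas the paper's explicit contraction needs nothing beyond the complex itself. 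Two small points to tighten if you write this up: directness of the decomposition (equivalently, well-definedness of the projection with kernel $C_{\star}\left(\HC , k\right)$) deserves the same careful bookkeeping you flag, since it is a statement about the quotient module defining $\otimes_{\HC}$; and you should note explicitly that $\varepsilon$ is involutive, which follows from the standing assumption that the augmentation ideal is closed under the involution.
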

\begin{proof}
The inclusion of the subcomplex $C_{\star}\left(\HC , I\right)$ into $k\left[N_{\star}\left(-\downarrow \HC\right)\right]\otimes_{\HC} \HB$ has a left inverse given by the canonical projection map, the kernel of which is $C_{\star}\left(\HC , k\right)$. This gives a splitting of $k\left[N_{\star}\left(-\downarrow \HC\right)\right]\otimes_{\HC} \HB$ as a direct sum $C_{\star}\left(\HC , I\right) \oplus C_{\star}\left(\HC , k\right)$. In order to prove the result we will invoke \cite[8.4.6]{weib} to show that the homology of $C_{\star}\left(\HC , k\right)$ is isomorphic to $k$ concentrated in degree zero. One defines an augmentation $\varepsilon\colon C_0\left(\HC , k\right) \rightarrow k$ by $\left[f\otimes 1_k\right]\mapsto 1_k$. One then checks that the maps $h_n\colon C_n\left(\HC,k\right)\rightarrow C_{n+1}\left(\HC,k\right)$ determined by $\left[\left(f_n,\dotsc ,f_1,f\right)\otimes 1_k\right] \mapsto \left[\left(f_n,\dotsc ,f_1,f, id_{0}\right)\otimes 1_k\right]$ for $n\geqslant 0$ and $h_{-1}\colon k \rightarrow C_0\left(\HC , k\right)$ determined by $1_k \mapsto \left[id_{0}\otimes 1_k\right]$ satisfy the conditions described in \cite[8.4.6]{weib}.
\end{proof}

\begin{defn}
Let $A$ be an augmented, involutive $k$-algebra. For $n\geqslant 0$, we define the \emph{$n^{th}$ reduced hyperoctahedral homology of $A$} to be
\[\widetilde{HO}_{n}(A)\coloneqq H_{n}\left( \HC ,I\right).\]
\end{defn}

\subsection{Epimorphism construction}
We begin by noting that a morphism $\left(\phi , g\right)$ in the category $\HC$ is an epimorphism if and only if the underlying map of sets is surjective.

\begin{rem}
\label{decomp-rem}
Consider a morphism $(\varphi , g)\in \mr{Hom}_{\HC}\left([x],[z]\right)$ and let $r=\llv \mr{Im}\left(\varphi\right)\rrv$. By the unique decomposition of morphisms in $\Delta$ \cite[Theorem B.2]{Lod} the morphism $\left(\varphi , g\right)$ can be written uniquely in the form $\left(i_{\phi}, id_{r-1}\right)\circ \left(\pi_{\phi},g\right)$ where $\left(\pi_{\phi},g\right) \in \mathrm{Hom}_{\EDH}\left([x],[r-1]\right)$ and $i_{\phi}\colon [r-1]\rightarrow [z]$ is a monomorphism in the category $\Delta$. Whenever we use this decomposition we will abuse notation and write $i_{\phi}$ for $\left(i_{\varphi},id_{r-1}\right)\in \mr{Hom}_{\HC}\left([r-1], [z]\right)$.
\end{rem}

\begin{defn}
Let $\EDH$ denote the subcategory of $\HC$ whose morphisms are the epimorphisms in $\HC$.
\end{defn}

\begin{defn}
Let $A$ be an augmented, involutive $k$-algebra with augmentation ideal $I$. We define a functor $\HBI\colon\EDH\rightarrow \kmod$ on objects by $[n]\rightarrow I^{\otimes n+1}$. On morphisms we define $\HBI$ to be the restriction of $\HB$ to the subcategory $\EDH$.
\end{defn}

\begin{rem}
The functor $\HBI$ is well-defined since we are only taking the epimorphisms of $\HC$ and the augmentation ideal $I$ is closed under the involution and under multiplication. 
\end{rem}

We define the chain complex $k\left[N_{\star}\left(-\downarrow \EDH\right)\right] \otimes_{\EDH} \HBI$ following Definition \ref{nerve-complex}. We note that there is an inclusion of chain complexes
\[i~_{\star}\colon k\left[N_{\star}\left(-\downarrow \EDH\right)\right] \otimes_{\EDH} \HBI\rightarrow C_{\star}\left(\HC , I\right).\]
We claim that this inclusion is a chain homotopy equivalence. In order to prove this we introduce the \emph{epimorphism construction for hyperoctahedral homology}.

\begin{lem}
\label{epi-induce}
A morphism 
\[\left(\psi , h\right) \colon \left([x]\xra{(\phi_1 ,g_1)} [z_1]\right) \rightarrow \left( [x]\xra{(\phi_2,g_2)} [z_2]\right)\]
in $\left([x]\downarrow\HC\right)$ induces a unique morphism
\[\ol{\left(\psi ,h\right)} \colon \left([x]\xra{(\pi_{\phi_1} ,g_1)} [r_1-1]\right) \rightarrow \left( [x]\xra{(\pi_{\phi_2},g_2)} [r_2-1]\right)\]
in $\left([x]\downarrow \EDH\right)$.
\end{lem}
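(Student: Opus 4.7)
The plan is to use the uniqueness of the epi-mono factorization in $\HC$, supplied by Remark \ref{decomp-rem} and the isomorphism $\HC\cong \ifas$, to descend the triangle in $\left([x]\downarrow \HC\right)$ to one in $\left([x]\downarrow \EDH\right)$. Commutativity of $(\psi,h)$ means that $(\psi,h)\circ (\phi_1,g_1)=(\phi_2,g_2)$, and substituting the decompositions $(\phi_j,g_j)=i_{\phi_j}\circ (\pi_{\phi_j},g_j)$ yields the identity $(\psi,h)\circ i_{\phi_1}\circ (\pi_{\phi_1},g_1)=i_{\phi_2}\circ (\pi_{\phi_2},g_2)$.

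First I would analyse the morphism $(\psi,h)\circ i_{\phi_1}\in \mathrm{Hom}_{\HC}\left([r_1-1],[z_2]\right)$. Passing to $\ifas$, its underlying map of sets has image equal to that of the underlying map of $(\phi_2,g_2)$, since precomposition by the surjection $(\pi_{\phi_1},g_1)$ does not change the image. Consequently, in the unique epi-mono decomposition of $(\psi,h)\circ i_{\phi_1}$ afforded by Remark \ref{decomp-rem}, the monomorphism part must coincide with $i_{\phi_2}$, and the epi part defines a morphism $\overline{(\psi,h)}\in \mathrm{Hom}_{\EDH}\left([r_1-1],[r_2-1]\right)$ satisfying $(\psi,h)\circ i_{\phi_1}=i_{\phi_2}\circ \overline{(\psi,h)}$.

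To verify that $\overline{(\psi,h)}$ makes the required triangle commute, I would string together the equalities
\[i_{\phi_2}\circ \overline{(\psi,h)}\circ (\pi_{\phi_1},g_1)=(\psi,h)\circ(\phi_1,g_1)=(\phi_2,g_2)=i_{\phi_2}\circ (\pi_{\phi_2},g_2)\]
and then left-cancel the monomorphism $i_{\phi_2}$ to obtain $\overline{(\psi,h)}\circ (\pi_{\phi_1},g_1)=(\pi_{\phi_2},g_2)$. For uniqueness, if $\beta$ is another epimorphism in $\EDH$ with $\beta \circ (\pi_{\phi_1},g_1)=(\pi_{\phi_2},g_2)$, then the labelled ordered disjoint-union formula of Definition \ref{ifas-comp-defn} shows that the surjection $(\pi_{\phi_1},g_1)$ is right-cancellable, forcing $\beta=\overline{(\psi,h)}$.

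The main obstacle I expect is the right-cancellability argument in the final step: one must confirm that surjective morphisms in $\HC$ are right-cancellable with respect to all of the hyperoctahedral data, not merely the underlying set maps. The $\ifas$ description reduces this to an inspection of Definition \ref{ifas-comp-defn}, since the ordered disjoint union at each preimage records enough information to recover both the $\cyc$-labels and the orderings. Some care is nevertheless required when a preimage of $(\pi_{\phi_1},g_1)$ is a singleton, because the label of $\beta$ at such a point must be read off from the shift of a single decorated element rather than from the inversion of a multi-element block.
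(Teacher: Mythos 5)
Your proposal is correct and follows essentially the same route as the paper: both arguments substitute the unique decompositions of Remark \ref{decomp-rem}, identify $\mathrm{Im}\left((\psi,h)\circ i_{\phi_1}\right)=\mathrm{Im}\left(i_{\phi_2}\right)$ to extract $\ol{(\psi,h)}$ as the epimorphism part of $(\psi,h)\circ i_{\phi_1}$, and left-cancel the monomorphism $i_{\phi_2}$ to verify the triangle. The only cosmetic difference is that the paper reads uniqueness off the uniqueness clause of the epi-mono factorization, whereas you obtain it by right-cancelling the epimorphism $(\pi_{\phi_1},g_1)$ --- which is automatic once one accepts the paper's observation that surjective-on-sets morphisms of $\HC$ are categorical epimorphisms, so your worry about the labels and orderings is already absorbed there.
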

\begin{proof}
Using Remark \ref{decomp-rem}, we have a commutative diagram
\begin{small}
\begin{center}
\begin{tikzcd}
\lb z_1\rb\arrow[rrrrrrrr, "{(\psi ,h)}", bend left] &&&& \lb x\rb\arrow[llll, "{(\phi_1 ,g_1)}",swap]\arrow[rrrr, "{(\phi_2 ,g_2)}"]\arrow[dll, "{(\pi_{\phi_1} , g_1)}"]\arrow[drr, "{(\pi_{\phi_2}, g_2)}",swap] &&&& \lb z_2\rb\\
&& \lb r_1-1\rb\arrow[ull, "{i_{\phi_1}}"]&&&& \lb r_2-1\rb\arrow[urr, "{i_{\phi_2}}",swap]
\end{tikzcd}
\end{center}
\end{small}
in $\HC$.

By construction, $\mathrm{Im}\left(i_{\phi_1}\right)= \mathrm{Im}\left(\left(\phi_1 , g_1\right)\right)$. From the commutativity of the diagram one deduces that $\mathrm{Im}\left((\psi ,h)\circ i_{\phi_1}\right)= \mathrm{Im}\left(i_{\phi_2}\right)$.

By Remark \ref{decomp-rem} there exists a unique morphism $\ol{(\psi,h)}\colon [r_1-1]\rightarrow [r_2-1]$ in $\EDH$ such that $(\psi ,h)\circ i_{\phi_1}= i_{\phi_2}\circ \ol{(\psi,h)}$.

By commutativity of the diagram we see that $i_{\phi_2}\circ \ol{(\psi,h)}\circ(\pi_{\phi_1} , g_1)= i_{\phi_2}\circ(\pi_{\phi_2}, g_2)$ and since $i_{\phi_2}$ is a monomorphism in $\HC$ we can cancel on the left and deduce that 
$\ol{(\psi,h)}\circ(\pi_{\phi_1} , g_1)=(\pi_{\phi_2}, g_2)$ as required.
\end{proof}

A routine check, which can be found in \cite[Proposition 27.8.5]{DMG}, demonstrates that this assignment on morphisms is functorial. We can therefore make the following definition.

\begin{defn}
\label{epi-const-func}
For $x\geqslant 0$, we define the functor $E_x\colon \left([x]\downarrow\HC\right)\rightarrow \left([x]\downarrow \EDH\right)$ on objects by $\left(\phi , g\right)\mapsto \left(\pi_{\phi} ,g\right)$ and on morphisms by $\left(\psi ,h\right)\mapsto \ol{\left(\psi ,h\right)}$.
We call $E_x$ the \emph{epimorphism construction for hyperoctahedral homology}.
\end{defn}

We observe that by the functoriality of the nerve construction, the functor $E_x$ induces a map of simplicial sets $N_{\star}E_x\colon N_{\star}([x]\downarrow\HC)\rightarrow N_{\star}([x]\downarrow \EDH)$. This induced map takes a string of composable morphisms in $\HC$ and sends it to the string of composable morphisms in $\EDH$ obtained by applying the epimorphism construction.

\begin{defn}
We define a map of chain complexes 
\[\chi_{\star}\colon C_{\star}\left( \HC,I\right)\rightarrow k\left[N_{\star}(-\downarrow\EDH)\right]\otimes_{\EDH} \HBI\]
to be determined in degree $n$ by 
\[\left[\left( f_n,\dotsc , f_1, f_0\right)\otimes Y_x\right]\mapsto \left[k\left[N_nE_x\right]\left( f_n,\dotsc ,f_1,f_0\right)\otimes Y_x\right].\]
\end{defn}

\begin{rem}
One must check that the maps $\chi_n$ are well-defined. This consists of a routine but lengthy check that the epimorphism construction is compatible with equivalence classes under the tensor products $-\otimes_{\HC} -$ and $-\otimes_{\EDH}-$. The full details can be found in \cite[Proposition 27.9.1]{DMG}.
\end{rem} 

\begin{thm}
\label{epi-thm}
Let $A$ be an augmented, involutive $k$-algebra with augmentation ideal $I$. For $n\geqslant 0$ there exist isomorphisms of $k$-modules
\[\widetilde{HO}_{n}(A) \cong H_{n}\left(k\left[N_{\star}(-\downarrow\EDH)\right]\otimes_{\EDH} \HBI\right)\cong H_{n}\left(C_{\star}\left(\EDH ,\HBI\right)\right).\]
\end{thm}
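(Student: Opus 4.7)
The plan is to establish both isomorphisms simultaneously by showing that the chain map $\chi_\star$ and the inclusion $i_\star\colon k[N_\star(-\downarrow \EDH)] \otimes_\EDH \HBI \to C_\star(\HC, I)$ are chain homotopy inverses. The second isomorphism in the statement is then immediate from the general isomorphism $k[N_\star(-\downarrow \C)] \otimes_\C F \cong C_\star(\C, F)$ established after Definition \ref{nerve-complex}, applied to $\C = \EDH$ and $F = \HBI$; the identification of the left-hand side with $\widetilde{HO}_n(A)$ is by construction in Section \ref{red-HO}.

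First I would verify that $\chi_\star \circ i_\star = \mathrm{id}$. Given a morphism $(\phi, g)$ already in $\EDH$, the underlying map $\phi$ is surjective, so its epi-mono factorization is trivial: $i_\phi = \mathrm{id}$ and $\pi_\phi = \phi$. By the uniqueness statement in Lemma \ref{epi-induce}, this forces $\overline{(\psi, h)} = (\psi, h)$ whenever $(\psi, h)$ is a morphism between objects of $([x] \downarrow \EDH)$. Hence the functor $E_x$ of Definition \ref{epi-const-func} restricts to the identity on the full subcategory $([x] \downarrow \EDH)$ of $([x] \downarrow \HC)$, so $\chi_\star$ acts as the identity on the image of $i_\star$.

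For the reverse composite, I would construct an explicit chain homotopy $s_\star\colon C_\star(\HC, I) \to C_{\star+1}(\HC, I)$ between $\mathrm{id}$ and $i_\star \circ \chi_\star$. The key observation is that the epi-mono factorizations assemble into a natural transformation $\eta_x\colon I_x \circ E_x \Rightarrow \mathrm{id}_{([x] \downarrow \HC)}$, where $I_x$ is the inclusion of $([x] \downarrow \EDH)$ into $([x] \downarrow \HC)$. Its component at an object $(\phi, g)\colon [x] \to [x_0]$ is the monomorphism $i_\phi \in \mathrm{Hom}_{\HC}([r-1], [x_0])$ from Remark \ref{decomp-rem}, and the naturality square is precisely the identity $(\psi, h) \circ i_{\phi_1} = i_{\phi_2} \circ \overline{(\psi, h)}$ that defines $\overline{(\psi, h)}$ in the proof of Lemma \ref{epi-induce}. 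Any natural transformation between functors induces a simplicial homotopy on the nerves, and the standard prism decomposition of $\Delta^1 \times \Delta^n$ furnishes an alternating sum of $n+1$ terms: the $k$-th summand is the class of the $(n+1)$-simplex obtained by applying $E_x$ to the initial segment of the string $(f_n, \ldots, f_1, f_0)$ up to stage $k$, inserting the appropriate component of $\eta_x$ at the transition, and leaving the remaining morphisms unchanged.

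The main obstacle will be verifying that the formulas for $s_\star$ descend to the tensor-product quotient over $\HC$. This parallels the well-definedness of $\chi_\star$ referenced in the remark preceding the theorem, and rests on two compatibilities: for an epimorphism $\alpha$ in $\HC$, the epi-mono factorization is stable under pre-composition by $\alpha$, so $E$ commutes with $\alpha^\star$; for a non-epimorphism $\alpha$, the extra monomorphism data is exactly what $\eta_x$ encodes, which causes the discrepancy to vanish in the quotient. Once well-definedness is secured, the chain homotopy identity $d s_\star + s_\star d = \mathrm{id} - i_\star \circ \chi_\star$ follows by the standard bookkeeping on face maps of the nerve used to show that natural transformations induce chain homotopies, and preservation of the subcomplex $C_\star(\HC, I)$ is automatic since $s_\star$ does not modify the tensor factor $Y_x$. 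The whole argument parallels the analogous reduction for symmetric homology in \cite[Section 6]{Ault}, which inspired the construction.
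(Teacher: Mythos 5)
Your proposal is correct and follows essentially the same route as the paper: it shows $\chi_\star\circ i_\star=\mathrm{id}$ directly and builds a presimplicial homotopy from $i_\star\circ\chi_\star$ to the identity by applying the epimorphism construction to an initial segment of the string and inserting the monomorphism $i_{\phi_j}$ from the unique decomposition, which is exactly the paper's map $h_j^n$. Your framing of these homotopy terms as coming from the natural transformation with components $i_\phi$ (whose naturality square is the defining equation of $\ol{(\psi,h)}$) is a clean conceptual packaging of the same construction.
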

\begin{proof}
We show that the inclusion map of chain complexes
\[i_{\star}\colon k\left[N_{\star}\left(-\downarrow \EDH\right)\right] \otimes_{\EDH} \HBI\rightarrow C_{\star}\left(\HC , I\right)\]
is a chain homotopy equivalence with inverse given by $\chi_{\star}$.
By construction, the composite $\chi_{\star} \circ i_{\star}$ is equal to the identity map on the chain complex $k\left[N_{\star}\left(-\downarrow \EDH\right)\right] \otimes_{\EDH} \HBI$. One shows that the composite $i_{\star}\circ \chi_{\star}$ is homotopic to the identity map on the chain complex $C_{\star}\left(\HC , I\right)$ by constructing a presimplicial homotopy between the two maps. We will make use of Remark \ref{decomp-rem}.

Let $h_j^n\colon C_n\left(\HC ,I\right) \rightarrow C_{n+1}\left(\HC ,I\right)$ be determined by
\[ \left[\left(f_n,\dotsc , f_1,f_0\right)\otimes Y_x\right] \mapsto \left[\left(f_n,\dotsc , f_{j+1},i_{\phi_j}, \ol{f_j},\dotsc , \ol{f_1}, E_{x}(f_0)\right)\otimes Y_x\right].\]
That is, $h_j^n$ is the map determined by mapping an equivalence class in $C_n\left(\HC ,I\right)$ to the equivalence class in $C_{n+1}\left(\HC ,I\right)$ obtained by applying the epimorphism construction to the first $j$ morphisms in the representative, inserting $i_{\phi_j}$ from the unique decomposition of Remark \ref{decomp-rem} and leaving the remaining morphisms unchanged. A routine check, which can be found in \cite[Proposition 27.10.2]{DMG}, demonstrates that these maps do indeed form a presimplicial homotopy between $i_{\star}\circ \chi_{\star}$ and the identity map.
\end{proof}

\section{Hyperoctahedral Homology and Group Homology}
\label{group-hom-sec}
We provide an application of our smaller complex for computing the hyperoctahedral homology of an augmented, involutive algebra $A$. We will show that in this case we can express the reduced hyperoctahedral homology of $A$ in terms of the group homology of a product of hyperoctahedral groups with coefficients in modules constructed from the epimorphisms in $\HC$. When the ground ring is a field of characteristic zero one can say more: the reduced hyperoctahedral homology of $A$ can be computed as the coinvariants of a group action. The key to this construction is S\l{}omi\'{n}ska's work on E-I-A-categories \cite{slom}, that is categories for which all endomorphisms are isomorphisms and all isomorphisms are automorphisms. We observe that $\EDH$ is just such a category. The results in this section are analogous to the symmetric homology case \cite[Section 7]{Ault}. The notation throughout this section is chosen to closely resemble \cite{slom} and \cite[Section 7]{Ault}.

\begin{defn}
Let $\mathbf{S}_0$ denote the opposite category of the poset of non-empty finite subsets of $\mathbb{N}$. The set of objects are non-empty ordered tuples of non-negative integers $\left(x_r<\cdots <x_0\right)$. There is a unique morphism $\left(x_r<\cdots <x_0\right)\rightarrow \left(x_{r^{\prime}}^{\prime}<\cdots <x_0^{\prime}\right)$ if $\llb x_0^{\prime},\dotsc , x_{r^{\prime}}^{\prime}\rrb \subseteq \llb x_0,\dotsc , x_r\rrb$.
\end{defn}

\begin{defn}
We define a functor $\mathcal{A}\colon \mathbf{S}_0\rightarrow \mathbf{Grp}$, where $\mathbf{Grp}$ is the category of groups, on objects by
\[\mathcal{A}\left(x_r<\cdots <x_0\right)= \prod_{i=0}^r \mathrm{Aut}_{\EDH} \left([x_i]\right) = \prod_{i=0}^r H_{x_i+1}.\]
The functor $\mathcal{A}$ acts on a morphism of $\mathbf{S}_0$ by projecting the factors of the product corresponding to the elements of the codomain.
\end{defn}

\begin{defn}
We define a functor $\mathcal{E}\colon \mathbf{S}_0\rightarrow \mathbf{Set}$ on objects by
\[\mathcal{E}\left(x_r<\cdots <x_0\right)=
\begin{cases}
\llb x_0\rrb & r=0\\
\prod_{i=1}^r \mathrm{Hom}_{\EDH}\left( [x_{i-1}] , [x_i]\right) & n\geqslant 1,
\end{cases}\]
where $\llb x_0\rrb$ is the one element set containing $x_0$. For a morphism $f\colon  \left(x_r<\cdots <x_0\right)\rightarrow \left(x_{r^{\prime}}^{\prime}<\cdots <x_0^{\prime}\right)$, $\mathcal{E}(f)$ is determined by composing adjacent morphisms in the product (or projecting factors in the case of $x_0$ and $x_r$) according to the elements present in the codomain. If the codomain is a tuple with one element, $x_0^{\prime}$, then $\mathcal{E}(f)$ is the unique set map into $\llb x_0^{\prime}\rrb$.
\end{defn}

\begin{defn}
\label{action-defn}
Let $\left(x_r<\cdots <x_0\right)$ be an object of $\mathbf{S}_0$. We define an action $\mu$ of the group $\mathcal{A}\left(x_r<\cdots <x_0\right)$ on the set $\mathcal{E}\left(x_r<\cdots <x_0\right)$ by
\[ \left(g_n, \dotsc ,g_0\right) \times \left(f_n,\dotsc , f_1\right)\mapsto \left(g_nf_ng_{n-1}^{-1}, \dotsc , g_1f_1g_0^{-1}\right).\]
\end{defn}

\begin{rem}
\label{action-rem}
We observe that this action is compatible with morphisms in $\mathbf{S}_0$. That is, for a morphism $f\colon  \left(x_r<\cdots <x_0\right)\rightarrow \left(x_{r^{\prime}}^{\prime}<\cdots <x_0^{\prime}\right)$, we have an equality $\mu \circ \left(\mathcal{A}(f)\times \mathcal{E}(f)\right) = \mathcal{E}(f) \circ \mu$.
\end{rem}

\begin{defn}
Let $A$ be an augmented, involutive $k$-algebra with augmentation ideal $I$. We define a functor $\mathcal{E}_I\colon \mathbf{S}_0\rightarrow \kmod$ on objects by
\[\mathcal{E}_I\left(x_r<\cdots <x_0\right) = k\left[\mathcal{E}\left(x_r<\cdots <x_0\right)\right] \otimes_k \HBI\left([x_0]\right).\]
Let $f\colon \left(x_r<\cdots <x_0\right)\rightarrow \left(x_{r^{\prime}}^{\prime}<\cdots <x_0^{\prime}\right)$ be a morphism in $\mathbf{S}_0$ and suppose $x_0^{\prime}= x_j$. The morphism $\mathcal{E}_I(f)$ is determined by
\[\left(f_n,\dotsc , f_1\right) \otimes x \mapsto \mathcal{E}(f)\left(f_n,\dotsc , f_1\right) \otimes \HBI\left(f_j\circ \cdots \circ f_1\right)(x).\]
\end{defn}

\begin{rem}
Observe that $k\left[\mathcal{E}\left(x_r<\cdots <x_0\right)\right] \otimes_k \HBI\left([x_0]\right)$ is an $\mathcal{A}\left(x_r<\cdots <x_0\right)$-module via the action of Definition \ref{action-defn} and that this construction is functorial by Remark \ref{action-rem}. We therefore have well-defined functors $H_q\left(\mathcal{A}(-), \mathcal{E}_I(-)\right) \colon \mathbf{S}_0\rightarrow \kmod$ for $q\geqslant 0$. 
\end{rem}

We can express these group homology functors in terms of the homology of small categories. Recall the notation of Remark \ref{HSC-hocolim-rem}. We fix $X=\left(x_r<\cdots <x_0\right)\in \mathbf{S}_0$. Let $GX$ be the category with one object and endomorphism set $\mathcal{A}(X)$. We observe that
\[H_q\left(\mathcal{A}(X), \mathcal{E}_I(X)\right) = H_q\left(GX , \mathcal{E}_I\right) = \mathrm{colim}_q^{GX} \mathcal{E}_I.\]

\begin{thm}
\label{ss-thm}
Let $A$ be an augmented, involutive $k$-algebra with augmentation ideal $I$. There exists a spectral sequence with
\[E_2^{p,q}= \mathrm{colim}_p^{\mathbf{S}_0} H_q\left(\mathcal{A}(-),\mathcal{E}_I(-)\right) \Rightarrow \widetilde{HO}_{p+q}(A).\]
\end{thm}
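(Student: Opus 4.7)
The plan is to apply S\l{}omi\'{n}ska's spectral sequence for the homology of E-I-A-categories \cite{slom} to the category $\EDH$ and then identify the abutment with $\widetilde{HO}_\star(A)$ via Theorem \ref{epi-thm}. First I would verify that $\EDH$ is an E-I-A-category. Any endomorphism $[n]\to[n]$ in $\EDH$ is an epimorphism between finite sets of the same cardinality, hence a bijection; thus the endomorphism monoid of $[n]$ coincides with its automorphism group, which is precisely $H_{n+1}$. Moreover, every isomorphism in $\HC$ lies in some hyperoctahedral group, so all isomorphisms are automorphisms. Isomorphism classes of objects in $\EDH$ are indexed by cardinality, and non-invertible epimorphisms $[n]\to[m]$ exist only when $m<n$, giving a strictly decreasing chain structure; this is precisely the data encoded by $\mathbf{S}_0$, whose objects are chains of distinct cardinalities and whose morphisms delete entries.

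Next, by Theorem \ref{epi-thm} we have $\widetilde{HO}_{n}(A)\cong H_{n}(\EDH,\HBI)=\mathrm{colim}_n^{\EDH}\HBI$ in the notation of Remark \ref{HSC-hocolim-rem}. S\l{}omi\'{n}ska's theorem decomposes the derived colimit over an E-I-A-category into two stages: an inner layer recording, for each chain of non-isomorphic objects, the non-invertible morphism data between consecutive entries together with the value of the coefficient functor at the terminal (smallest) entry, equipped with the natural action of the product of automorphism groups by pre- and post-composition; and an outer derived colimit over the chain poset. The functors $\mathcal{A}$ and $\mathcal{E}_I$ are precisely the inner data for $\EDH$ and $\HBI$: the conjugation formula $(g_n,\dotsc,g_0)\cdot(f_n,\dotsc,f_1)=(g_nf_ng_{n-1}^{-1},\dotsc,g_1f_1g_0^{-1})$ of Definition \ref{action-defn} is the standard action arising from functoriality of $\HBI$ on automorphisms, while the factor $\HBI([x_0])$ appears because $[x_0]$ is the terminal object in the chain under the epimorphisms. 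Applying \cite{slom} then produces the desired spectral sequence
\[E_2^{p,q}=\mathrm{colim}_p^{\mathbf{S}_0} H_q(\mathcal{A}(-),\mathcal{E}_I(-)) \Rightarrow H_{p+q}(\EDH,\HBI)\cong \widetilde{HO}_{p+q}(A),\]
with the last isomorphism supplied by Theorem \ref{epi-thm}.

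The main steps are therefore: (i) check the E-I-A-property of $\EDH$; (ii) translate the definitions of $\mathbf{S}_0,\mathcal{A},\mathcal{E}$ and $\mathcal{E}_I$ into S\l{}omi\'{n}ska's framework and match conventions (direction of epimorphisms, ordering of tuples, which endpoint of the chain carries the coefficient module); (iii) verify that $H_q(\mathcal{A}(-),\mathcal{E}_I(-))$ is genuinely a functor on $\mathbf{S}_0$ so that its derived colimit is defined, which is essentially the compatibility statement of Remark \ref{action-rem}; and (iv) invoke the spectral sequence from \cite{slom} and reindex to our conventions. The principal obstacle is bookkeeping: matching S\l{}omi\'{n}ska's general E-I-A indexing with the concrete poset $\mathbf{S}_0$ and ensuring that the action and coefficient functor are transported correctly through the identification, exactly as was done in the symmetric case \cite[Section 7]{Ault}; once these dictionaries are in place, the spectral sequence is a direct consequence of the general theory.
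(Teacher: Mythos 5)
Your proposal is correct and follows essentially the same route as the paper: both decompose the derived colimit of $\HBI$ over the E-I-A-category $\EDH$ into an inner colimit over the automorphism groups $\mathcal{A}(-)$ and an outer derived colimit over the chain poset $\mathbf{S}_0$ via S\l{}omi\'{n}ska's theory (the paper phrases this as the bisimplicial $k$-module $\mathrm{hocolim}_{\mathbf{S}_0}\,\mathrm{hocolim}_{GX}\,\mathcal{E}_I$ together with its standard ``horizontal homology first'' spectral sequence and \cite[Proposition 1.6]{slom}), and both identify the abutment with $\widetilde{HO}_{\star}(A)$ using Theorem \ref{epi-thm}. One small correction to your bookkeeping: in the tuple $\left(x_r<\cdots <x_0\right)$ the coefficient module $\HBI\left([x_0]\right)$ sits at the \emph{largest} object $[x_0]$, which is the source of the chain of epimorphisms $[x_0]\rightarrow\cdots\rightarrow [x_r]$, not the smallest entry.
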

\begin{proof}
Recall the notation of Remark \ref{HSC-hocolim-rem} and consider
\[Y= \underset{\mathbf{S}_0}{\mr{hocolim}}\, \underset{GX}{\mathrm{hocolim}}\, \mathcal{E}_I.\]
This is a bisimplicial $k$-module with the horizontal simplicial structure induced from the nerve of $GX$ and the vertical simplicial structure induced from the nerve of $\mathbf{S}_0$. Taking the ``horizontal homology first" spectral sequence associated to this bisimplicial $k$-module we obtain a spectral sequence of the form
\[E_2^{p,q}= \mathrm{colim}_p^{\mathbf{S}_0} \mathrm{colim}_q^{GX} \mathcal{E}_I \Rightarrow H_{p+q}(Y).\]
By \cite[Proposition 1.6]{slom} and Theorem \ref{epi-thm} we have 
\[ H_{p+q}(Y)\cong \mathrm{colim}_{p+q}^{\EDH} \HBI = \widetilde{HO}_{p+q}(A)\]
as required.
\end{proof}

\begin{cor}
Suppose that $k$ is a field of characteristic zero. Then
\[\widetilde{HO}_p(A) = \mathrm{colim}_p^{\mathbf{S}_0} \, \mathcal{E}_I(X)_{\mathcal{A}(X)},\]
where $\mathcal{E}_I(X)_{\mathcal{A}(X)}$ denotes the coinvariants of $\mathcal{E}_I(X)$ under the action of $\mathcal{A}(X)$.
\end{cor}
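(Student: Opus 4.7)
The plan is to deduce the corollary by collapsing the spectral sequence of Theorem \ref{ss-thm}. The key input is the classical fact that, over a field of characteristic zero, group homology of a finite group vanishes in positive degrees. Since $\mathcal{A}(X) = \prod_{i=0}^r H_{x_i+1}$ is a finite product of finite hyperoctahedral groups, it is itself a finite group, so its group algebra over $k$ is semisimple by Maschke's theorem. Consequently, for every object $X \in \mathbf{S}_0$ and every $k[\mathcal{A}(X)]$-module $M$, we have $H_q(\mathcal{A}(X), M) = 0$ for $q \geqslant 1$, while $H_0(\mathcal{A}(X), M) = M_{\mathcal{A}(X)}$ is just the module of coinvariants.

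Applying this pointwise with $M = \mathcal{E}_I(X)$, the functor $H_q(\mathcal{A}(-), \mathcal{E}_I(-)) \colon \mathbf{S}_0 \rightarrow \kmod$ is identically zero for $q \geqslant 1$, and for $q=0$ it agrees with the functor $X \mapsto \mathcal{E}_I(X)_{\mathcal{A}(X)}$. Therefore the $E_2$-page of the spectral sequence of Theorem \ref{ss-thm} is concentrated on the row $q=0$:
\[
E_2^{p,q} = \begin{cases} \mathrm{colim}_p^{\mathbf{S}_0} \mathcal{E}_I(-)_{\mathcal{A}(-)} & q=0, \\ 0 & q\geqslant 1. \end{cases}
\]

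A spectral sequence concentrated on a single row collapses at $E_2$ with no possible differentials and no extension problems, so
\[
\widetilde{HO}_p(A) \cong E_2^{p,0} = \mathrm{colim}_p^{\mathbf{S}_0} \, \mathcal{E}_I(X)_{\mathcal{A}(X)},
\]
which is the claimed identification. There is essentially no obstacle here beyond verifying that the coefficient functor $X \mapsto \mathcal{E}_I(X)_{\mathcal{A}(X)}$ is functorial in $X$ — which follows from the compatibility of the action $\mu$ with morphisms of $\mathbf{S}_0$ noted in Remark \ref{action-rem}, since this compatibility ensures the coinvariants are well-defined on $\mathbf{S}_0$-morphisms and that the displayed derived colimit makes sense.
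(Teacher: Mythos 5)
Your proposal is correct and follows essentially the same route as the paper: the paper likewise collapses the spectral sequence of Theorem \ref{ss-thm} by noting that over a field of characteristic zero every module over a finite group (here $\mathcal{A}(X)$) is projective, so $H_q(\mathcal{A}(X),\mathcal{E}_I(X))=0$ for $q>0$, with $H_0$ giving the coinvariants. Your invocation of Maschke's theorem is just a different name for the same input, and your extra remarks on the single-row collapse and functoriality of the coinvariants are harmless elaborations of what the paper leaves implicit.
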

\begin{proof}
Recall that $H_{\star}\left(G,M\right)=\mathrm{Tor}_{\star}^{k[G]}\left(k , M\right)$ for a group $G$ and a left $G$-module $M$ \cite[C.3]{Lod}. When $k$ is a field of characteristic zero and $G$ is finite, $M$ is projective \cite[\RNum{1}.8 Exercise 5]{Brown} and so $H_q\left(\mathcal{A}(X), \mathcal{E}_I(X)\right)=0$ for all $q>0$. Therefore the spectral sequence of Theorem \ref{ss-thm} collapses as required.
\end{proof}

\section{An Extension of the Hyperoctahedral Category}
\label{exten-sec}
We extend the hyperoctahedral category $\HC$ by appending an initial object. This extra data yields the structure of a symmetric strict monoidal category under the disjoint union. We can extend the hyperoctahedral bar construction to this new category. One benefit of this new formulation will become apparent when we prove Fiedorowicz's theorem for the hyperoctahedral homology of monoid algebras in Section \ref{Fie-sec}.

\begin{defn}
Let $\DHP$ be the category formed by appending an initial object, the empty set $\emptyset$, to the category $\HC$. For $n\geqslant 0$ we will denote the unique morphism $\emptyset\rightarrow [n]$ by $i_n$.
\end{defn}

\begin{prop}
$\DHP$ is a symmetric strict monoidal category under the disjoint union of sets.
\end{prop}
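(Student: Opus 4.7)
The plan is to define the disjoint union on $\DHP$ explicitly and then check the required axioms. On objects, set $[n] \sqcup [m] := [n+m+1]$ with $\emptyset$ as the strict two-sided unit. On morphisms, given $(\phi_1, g_1) \in \mathrm{Hom}_{\HC}([n_1], [p_1])$ and $(\phi_2, g_2) \in \mathrm{Hom}_{\HC}([n_2], [p_2])$, I set $(\phi_1, g_1) \sqcup (\phi_2, g_2) := (\phi_1 \sqcup \phi_2,\, g_1 \oplus g_2)$, where $\phi_1 \sqcup \phi_2 \in \mathrm{Hom}_{\Delta}([n_1+n_2+1], [p_1+p_2+1])$ applies $\phi_1$ on the first block $\{0,\dotsc,n_1\}$ and the translate $j \mapsto p_1+1+\phi_2(j-n_1-1)$ on the second block, and $g_1 \oplus g_2 \in H_{n_1+n_2+2}$ is defined by concatenating the $C_2$-tuples and taking the block-diagonal product of the symmetric-group components. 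Tensoring with $i_n$ on either side is defined as the obvious shift of the other factor (or $i_{n+m+1}$ when both are initial). This is well-defined because the block-diagonal embeddings $H_{n+1}\times H_{m+1}\hookrightarrow H_{n+m+2}$ are group homomorphisms.

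The main step is then to verify that $\sqcup$ is a bifunctor $\DHP \times \DHP \to \DHP$. The nontrivial point is compatibility with composition: for $(\psi_j, h_j)\circ (\phi_j, g_j)$, $j=1,2$, one needs the two identities
\[
(h_1 \oplus h_2)_{\star}(\phi_1 \sqcup \phi_2) \;=\; h_{1\star}(\phi_1) \sqcup h_{2\star}(\phi_2), \qquad (\phi_1 \sqcup \phi_2)^{\star}(h_1 \oplus h_2) \;=\; \phi_1^{\star}(h_1) \oplus \phi_2^{\star}(h_2).
\]
I would establish these by reducing to the generators. Under our indexing, the generators $\theta_k, t_k$ of the first block have indices $0 \leqslant k \leqslant n_1$ (or $\leqslant n_1 -1$ for $\theta_k$) while the generators of the second block have indices $k \geqslant n_1 + 1$, and the face/degeneracy maps $\delta_i, \sigma_j$ of $\phi_1 \sqcup \phi_2$ are likewise block-supported. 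A direct case check against the formulas of Definition \ref{delta-h-defn} shows that each case of $\delta_i^{\star}, \sigma_j^{\star}, (\theta_k)_{\star}, (t_k)_{\star}$ only involves indices within a narrow range around $i, j, k$, so the twisting never crosses the block boundary and the identities reduce to their single-block versions, which hold by definition.

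Strict associativity and unitality of $\sqcup$ now follow from the literal identification $[n] \sqcup [m] = [n+m+1]$ (which makes the set operation strictly associative and strictly unital for $\emptyset$) together with the fact that the block-diagonal inclusions of hyperoctahedral groups are strictly associative. Finally, define the braiding $\tau_{[n],[m]} \colon [n+m+1] \to [n+m+1]$ to be $(id_{n+m+1},\, c_{n,m})$, where $c_{n,m} \in H_{n+m+2}$ has trivial $C_2$-component and symmetric-group component the $(n+1,m+1)$-block-swap permutation; braidings involving $\emptyset$ are the unique identity morphisms. Naturality follows because the block-swap conjugates $g_1 \oplus g_2$ to $g_2 \oplus g_1$ (again from the relations in Definition \ref{delta-h-defn}, or equivalently from the analogous fact in $H_{n+m+2}$), and the hexagon and symmetry axiom $\tau_{[m],[n]} \circ \tau_{[n],[m]} = id_{n+m+1}$ reduce to the standard symmetric monoidal structure on the skeleton of finite sets. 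The main obstacle in this entire argument is the block-locality verification in the functoriality step; once that is in hand, everything else is formal or classical.
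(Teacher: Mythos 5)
Your construction is exactly the one the paper uses: block-wise disjoint union on objects and morphisms, the empty set as strict unit, and the block-swap permutation as the symmetry; the paper simply asserts the routine verifications that you spell out (bifunctoriality via block-locality of the relations in Definition \ref{delta-h-defn}, strict associativity, naturality of the braiding). Your proposal is correct and matches the paper's approach, just in greater detail.
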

\begin{proof}
On objects we observe that $[n]\amalg [m] = [n+m+1]$. For $(\phi,g)\in \mr{Hom}_{\DHP}\left([n],[n_1]\right)$ and $(\psi,h)\in \mr{Hom}_{\DHP}\left([m],[m_1]\right)$ the morphism
\[(\phi,g)\amalg (\psi,h)\colon [n+m+1]\rightarrow [n_1+m_1+1]\]
is defined by $(\phi,g)$ acting on the first $n+1$ elements and $(\psi,h)$ acting on the remaining $m+1$ points. One can check that the disjoint union is associative with the empty set as a unit. The transposition functor $[n]\amalg [m]\rightarrow [m]\amalg [n]$ is the identity on objects and on morphisms is defined by precomposition with the block permutation that transposes the first $n+1$ elements with the remaining $m+1$ elements.
\end{proof}

We can extend the hyperoctahedral bar construction to the category $\DHP$.

\begin{defn}
Let $\HBP\colon \DHP\rightarrow \kmod$ be the functor defined as follows. On the subcategory $\HC$ we define $\HBP= \HB$.  We define $\HBP\left(\emptyset\right)=k$. We define $\HBP(i_n)\colon k\rightarrow A^{\otimes n+1}$ to be the inclusion of $k$-algebras.
\end{defn}

\begin{thm}
Let $A$ be an involutive $k$-algebra. There is an isomorphism of graded $k$-modules $HO_{\star}(A)\cong H_{\star}\left(\DHP , \HBP\right)$.
\end{thm}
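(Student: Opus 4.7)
The plan is to show that the inclusion functor $\iota\colon \HC \hookrightarrow \DHP$ induces a quasi-isomorphism
\[\iota_\star\colon C_\star(\HC, \HB) \rightarrow C_\star(\DHP, \HBP)\]
of Gabriel--Zisman complexes (Definition \ref{GabZisCpx}); since $\HBP \circ \iota = \HB$, the map is well defined. Because $\emptyset$ is initial in $\DHP$ and there are no morphisms into $\emptyset$ from any object of $\HC$, the map $\iota_\star$ is an inclusion of chain complexes, and we obtain a short exact sequence with quotient complex $Q_\star$ spanned in degree $n$ by classes $(f_n, \ldots, f_1, x)$ whose initial object $C_0 = \emptyset$ (so $x \in \HBP(\emptyset) = k$). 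It will suffice to show $Q_\star$ is acyclic.

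I would then pass to the normalized chain complex $Q^N_\star$, which is quasi-isomorphic to $Q_\star$. Since there are no morphisms from an object of $\HC$ back to $\emptyset$, once a non-degenerate string leaves $\emptyset$ it never returns; combined with the initiality of $\emptyset$, this forces a non-degenerate generator of $Q^N_n$ for $n \geqslant 1$ to be of the form $(f_n, \ldots, f_2, i_{x_1}, x)$ where $(f_n, \ldots, f_2)$ is a non-degenerate string of length $n-1$ in $\HC$ starting at some $[x_1]$. This yields an isomorphism $Q^N_n \cong k[N^N_{n-1}\HC]$ for $n \geqslant 1$ and $Q^N_0 \cong k$.

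Next I would identify the face maps on $Q^N_\star$. The zeroth face $\partial_0$ sends a generator to $(f_n, \ldots, f_2, \HBP(i_{x_1})(x)) = (f_n, \ldots, f_2, x\cdot 1_A^{\otimes x_1+1})$, which lies in the subcomplex $C_\star(\HC, \HB)$ and thus vanishes in $Q_\star$. The first face $\partial_1$ composes $f_2 \circ i_{x_1} = i_{x_2}$ by uniqueness of morphisms out of $\emptyset$; under the identification above, this amounts to dropping $f_2$ from $(f_n, \ldots, f_2)$. Analogously, $\partial_i$ for $2\leqslant i\leqslant n-1$ and $\partial_n$ reproduce the internal composition and terminal truncation face maps of the nerve $N_\star \HC$. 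Up to an overall sign, $Q^N_\star$ therefore coincides with the augmented normalized nerve chain complex of $\HC$ shifted by one degree.

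Finally, the contractibility of $B\HC$, already invoked in the proof that $HO_\star(k) \cong k$ is concentrated in degree zero, implies that the augmented nerve complex is acyclic. It follows that $Q^N_\star$, and hence $Q_\star$, is acyclic. The associated long exact sequence in homology shows that $\iota_\star$ is a quasi-isomorphism, and passage to homology delivers the desired isomorphism $HO_\star(A) \cong H_\star(\DHP, \HBP)$. The main bookkeeping challenge is carefully matching face maps and signs between $Q^N_\star$ and the augmented nerve; once that is verified, the result is immediate.
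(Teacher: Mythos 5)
Your argument is correct. The paper itself gives essentially no proof here --- it is a one-sentence deferral to the arguments of Section 4 of Ault's symmetric homology paper --- so your write-up is in effect the missing argument, and it is the natural one: split $C_{\star}\left(\DHP,\HBP\right)$ into the subcomplex $C_{\star}\left(\HC,\HB\right)$ (closed under faces and degeneracies because no object of $\HC$ admits a morphism to $\emptyset$) and the quotient $Q_{\star}$ carried by strings beginning at $\emptyset$, then show $Q_{\star}$ is acyclic. All the individual steps check out: in the normalized complex a nondegenerate string starting at $\emptyset$ must leave immediately via some $i_{x_1}$, since the only endomorphism of $\emptyset$ is the identity; $\partial_0$ lands in the subcomplex and so vanishes in the quotient; initiality gives $f_2\circ i_{x_1}=i_{x_2}$, so the remaining faces reproduce, after the degree shift, the faces of the augmented normalized nerve complex of $\HC$ (the overall sign discrepancy is harmless --- conjugate by $(-1)^n$ in degree $n$); and the acyclicity of that augmented complex is exactly the contractibility of $N_{\star}\HC$ that the paper already establishes when computing $HO_{\star}(k)$. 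The long exact sequence associated to the short exact sequence of complexes then shows the inclusion is a quasi-isomorphism, which is the stated theorem. What your route buys is a self-contained argument where the paper relies entirely on an external reference; I see no gaps.
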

\begin{proof}
One can easily check that the arguments of \cite[Section 4]{Ault} can be applied in the hyperoctahedral case.
\end{proof}

\section{Fiedorowicz's Theorem for Monoid Algebras}
\label{Fie-sec}
In this section we provide a proof of Fiedorowicz's theorem for the hyperoctahedral homology of monoid algebras, Theorem \ref{monoid-thm}. This result originally appeared in \cite[Proposition 2.3]{Fie} but was never published. It relates the hyperoctahedral homology of a monoid algebra for a monoid with involution to the homology of May's two-sided bar construction. Throughout this section all topological spaces are assumed to be compactly generated weak Hausdorff. We will assume that the category of based topological spaces and the category of based simplicial sets are equipped with the classical Quillen model structures. In particular, a weak homotopy equivalence is a map that induces isomorphisms on homotopy groups. Furthermore, a $C_2$-weak homotopy equivalence is a $C_2$-equivariant map which induces weak homotopy equivalences on the fixed point spaces for both subgroups of $C_2$.

\subsection{Categories and monoids}

Let $C_2\mhyphen\mathbf{Set}_{\star}$ denote the category of finite based sets with a basepoint-preserving $C_2$-action. The morphisms are basepoint-preserving, action-preserving maps of sets. Let $C_2\mhyphen \mathbf{Top}_{\star}$ denote the category of based topological spaces with a basepoint-preserving $C_2$-action. The morphisms are basepoint-preserving continuous maps compatible with the $C_2$-action.

\begin{defn}
\label{total-space}
Let $G$ be a discrete group. Let $\mathbf{E}G$ be the category whose objects are the elements of $G$  with a unique morphism from each object to any other. Let $E_{\star}G$ denote the nerve of this category. Let $EG$ denote the geometric realization of $E_{\star}G$. Let $EG_{+}$ denote the space $EG$ with a disjoint basepoint appended. We will refer to both $E_{\star}G$ and $EG$ as the \emph{total space of $G$}. 
\end{defn}

\begin{defn}
A \emph{monoid with involution} is a monoid $M$, for which the unit element is a non-degenerate basepoint, together with an anti-homomorphism of monoids $M\rightarrow M$, which will be denoted by $m\mapsto \ol{m}$. A \emph{topological monoid with involution} is a monoid with involution equipped with a topology for which the binary operation and the involution are continuous.
\end{defn} 

\begin{rem}
Note that a monoid with involution can be thought of as a topological monoid with involution by equipping it with the discrete topology.
\end{rem}

\subsection{The two-sided bar construction}
\label{bar-constr}
We recall the two-sided bar construction from \cite[Section 9]{MayGILS}.

A \emph{monad} in the category $\mathbf{Top}_{\star}$ is an endofunctor $F$ together with natural transformations $\mu\colon FF\Rightarrow F$ and $\eta\colon \mathbf{1}\Rightarrow F$, where $\mathbf{1}$ is the identity functor on $\mathbf{Top}_{\star}$, satisfying the conditions of \cite[Definition 2.1]{MayGILS}. An \emph{$F$-algebra} is a based topological space $X$ together with a map $\xi\colon FX\rightarrow X$, compatible with $\mu$ and $\eta$ in the sense of \cite[Definition 2.2]{MayGILS}. An \emph{$F$-functor} is a functor $G$, whose source category is $\mathbf{Top}_{\star}$, together with a natural transformation $\lambda\colon GF\Rightarrow G$, compatible with $\mu$ and $\eta$ in the sense of \cite[Definition 9.4]{MayGILS}.

Let $F$ be a monad in $\mathbf{Top}_{\star}$. Let $X$ be an $F$-algebra and let $G$ be an $F$-functor. The \emph{two-sided bar construction} $B_{\star}\left(G,F,X\right)$, is the simplicial space defined in degree $n$ by $B_n\left(G,F,X\right)=GF^nX$, with the $0$-face being induced by the natural transformation $\lambda$, the $n$-face induced by the map $\xi$ and the intermediate faces being induced by the natural transformation $\mu$. The degeneracies are induced by the natural transformation $\eta$.

We denote the based topological space obtained by taking the geometric realization by $B\left(G,F,X\right)$.

\subsection{$A_{\infty}$-operads}
We recall some facts about $A_{\infty}$-operads in $\mathbf{Top}_{\star}$, in particular the trivial operad and the little intervals operad, following \cite[7.4]{FieCS}. Note that we are using the non-symmetric form of $A_{\infty}$-operad, see \cite[3.12, 3.14]{MayGILS}.

An \emph{$A_{\infty}$-operad} $\mathcal{O}$ is a collection of contractible spaces $\llb \mathcal{O}(n): n\geqslant 0\rrb$, such that $\mathcal{O}(0)=\star$, there is a unit element $1\in \mathcal{O}(1)$ and there exist composition maps
\[\gamma\colon \mathcal{O}(n)\times \mathcal{O}(k_1)\times \cdots \times \mathcal{O}(k_n)\rightarrow \mathcal{O}(k_1+\cdots + k_n)\]
satisfying Conditions 1 and 2 of \cite[Definition 1.1]{MayGILS}.

An \emph{$\mathcal{O}$-space} is a based topological space $X$ together with maps $\theta\colon \mathcal{O}(n)\times X^n\rightarrow X$ satisfying Conditions 1 and 2 of \cite[Lemma 1.4]{MayGILS}.

The monad associated to an $A_{\infty}$-operad, $\mathcal{O}$, is the functor $O\colon \mathbf{Top}_{\star}\rightarrow \mathbf{Top}_{\star}$  defined on objects by
\[O(X)= \frac{\coprod_{n\geqslant 0} \mathcal{O}(n)\times X^n}{\approx}\]
where $\approx$ denotes the subspace generated by the equivalence relation induced from the insertion and deletion of basepoints. That is, it is the functor constructed from $\mathcal{O}$ via \cite[Construction 2.4]{MayGILS} where we omit the equivariance conditions.

\begin{eg}
\label{james-eg}
The first example of an $A_{\infty}$-operad is the \emph{trivial operad}, $\mathcal{J}=\llb \star \colon n\geqslant 0\rrb$. A $\mathcal{J}$-space is a topological monoid. The associated monad $J$ is the \emph{James construction}, originally defined in \cite[Section 1]{James}. For $X\in \mathbf{Top}_{\star}$ with basepoint $e$, $J(X)$ is the quotient of $\coprod_{n\geqslant 1} X^n$ by the equivalence relation generated by all identifications of the form
\[\left(x_1,\dotsc ,x_{i-1},e,x_{i+1},\dotsc , x_n\right) \sim \left(x_1,\dotsc x_{i-1},x_{i+1},\dotsc , x_n\right)\]
For a morphism $f\colon X\rightarrow Y$ in $\mathbf{Top}_{\star}$, $J(f)$ is the morphism defined by applying $f$ point-wise.
\end{eg}

\begin{eg}
The second example of an $A_{\infty}$-operad is the \emph{little intervals operad}, $\mathcal{C}_1$, originally due to Boardman and Vogt \cite[Example 5]{BV}. The space
\[\mathcal{C}_1(n)=\llb \left([x_1,y_1],\dotsc , [x_n,y_n]\right) : 0\leqslant x_1<y_1\leqslant x_2 <\cdots \leqslant x_n<y_n\leqslant 1\rrb,\]
where the $[x_i,y_i]$ are closed subintervals of the unit interval $[0,1]$, which is the unit in $\mathcal{C}_1(1)$. The composition maps 
\[\gamma\colon \mathcal{C}_1(n)\times \mathcal{C}_1(k_1)\times \cdots \times \mathcal{C}_1(k_n)\rightarrow \mathcal{C}_1(k_1+\cdots + k_n)\]
are defined by embedding the intervals of $\mathcal{C}_1(k_i)$ into the $i^{th}$ interval of an element of $\mathcal{C}_1(n)$.
The monad $\mathsf{C}_1$ associated to $\mathcal{C}_1$ is called the \emph{little intervals monad}.
\end{eg}

We can extend the James construction, $J$, and the little intervals monad, $\mathsf{C}_1$, to functors on the category $C_2\mhyphen\mathbf{Top}_{\star}$ following \cite{ryb} and \cite[Section 3]{xic} respectively. 

\begin{defn}
\label{equiv-james-defn}
We extend the James construction to a functor $J\colon C_2\mhyphen\mathbf{Top}_{\star}\rightarrow C_2\mhyphen\mathbf{Top}_{\star}$ by defining the $C_2$-action $t\left(x_1,\dotsc , x_n\right) = \left(tx_n,\dotsc , tx_1\right)$.
\end{defn}

\begin{defn}
\label{equiv-c1-defn}
We define a $C_2$-action on $\mathcal{C}_1(n)$, the $n^{th}$ space of the little intervals operad, by reflecting the unit interval about its midpoint. That is,
\[t\left([x_1,y_1],\dotsc ,[x_n,y_n]\right)=\left([1-y_n,1-x_n],\dotsc , [1-y_1, 1-x_1]\right).\]
We extend the little intervals monad to a functor $\mathsf{C}_1\colon C_2\mhyphen\mathbf{Top}_{\star}\rightarrow C_2\mhyphen\mathbf{Top}_{\star}$ by defining the $C_2$-action
\[t\left[\left([x_1,y_1],\dotsc ,[x_n,y_n]\right),\left(x_1,\dotsc ,x_n\right)\right] =\left[t\left([x_1,y_1],\dotsc ,[x_n,y_n]\right), \left(tx_n,\dotsc , tx_1\right)\right].\]
\end{defn}

\begin{prop}
\label{c-1-prop}
There is a natural $C_2$-weak homotopy equivalence $\mathsf{C}_1(X)\rightarrow J(X)$ for any $X\in C_2\mhyphen\mathbf{Top}_{\star}$.
\end{prop}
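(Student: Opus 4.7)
The plan is to define the natural forgetful map
\[ p_X \colon \mathsf{C}_1(X) \rightarrow J(X), \qquad \lb \left((I_1,\dotsc,I_n),(x_1,\dotsc,x_n)\right)\rb \longmapsto \lb (x_1,\dotsc,x_n)\rb,\]
which forgets the little-interval data while retaining the $X$-entries in their given order, and then to verify in three steps that (i) $p_X$ is natural and $C_2$-equivariant, (ii) $p_X$ is a non-equivariant weak equivalence, and (iii) $p_X$ restricts to a weak equivalence on $C_2$-fixed points.

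First I would check equivariance by direct comparison of Definitions \ref{equiv-james-defn} and \ref{equiv-c1-defn}: both $C_2$-actions send the underlying $X$-tuple to $(tx_n,\dotsc,tx_1)$, so $p_X$ commutes with $t$. Naturality in $X$ is immediate, and the basepoint identifications defining $J(X)$ are weaker than those used to construct $\mathsf{C}_1(X)$, so $p_X$ descends to the quotient.

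For (ii), I would invoke the classical approximation theorem: the operad map $\mathcal{C}_1 \rightarrow \mathcal{J}$ is a levelwise weak equivalence because each $\mathcal{C}_1(n)$ is contractible, and a filtration of both monadic constructions by word length (using non-degeneracy of the basepoint so the successive inclusions are cofibrations) gives an inductive comparison of the successive cofibres, cf.\ \cite[Theorem A.2]{MayGILS}.

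The main new content is (iii). A point $\left((I_1,\dotsc,I_n),(x_1,\dotsc,x_n)\right)$ of $\mathcal{C}_1(n)\times X^n$ is fixed by $t$ if and only if $tI_{n+1-i}=I_i$ and $tx_{n+1-i}=x_i$ for every $i$, so the fixed locus decouples as $\mathcal{C}_1(n)^{C_2}\times (X^n)^{\mr{tw}}$, with $(X^n)^{\mr{tw}}$ the twisted fixed set of the reversal-with-$t$ action. A direct calculation identifies $\mathcal{C}_1(n)^{C_2}$ with the convex space of arrangements symmetric about $1/2$ (parametrised by the first $\lfloor n/2\rfloor$ intervals, together with an extra width parameter controlling a middle interval centred at $1/2$ when $n$ is odd), which is contractible. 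Consequently the map induced by $p_X$ on each successive cofibre of the word-length filtration of $\mathsf{C}_1(X)^{C_2}$ is a projection with contractible fibres onto the corresponding cofibre of the filtration of $J(X)^{C_2}$, hence a weak equivalence; induction on word length then finishes the argument. The hard part is ensuring the word-length filtration is compatible with taking $C_2$-fixed points through the basepoint identifications, which I would handle by the assumption (built into the setup of Section \ref{bar-constr}) that the basepoint of $X$ is $C_2$-fixed and non-degenerate, so that the filtration inclusions are $C_2$-cofibrations and the fixed-point functor commutes with the relevant pushouts.
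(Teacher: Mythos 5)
Your proposal is correct and takes essentially the same route as the paper: the comparison map is the one induced by the unique operad map $\mathcal{C}_1\rightarrow\mathcal{J}$, one checks it intertwines the $C_2$-actions of Definitions \ref{equiv-james-defn} and \ref{equiv-c1-defn}, and the non-equivariant approximation theorem handles the underlying weak equivalence. The paper compresses the fixed-point verification into ``the $C_2$-weak homotopy equivalence follows,'' whereas you supply it explicitly via the contractibility of the convex space $\mathcal{C}_1(n)^{C_2}$ of reflection-symmetric configurations and the word-length filtration; this is a correct and worthwhile filling-in of the omitted step.
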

\begin{proof}
For any $A_{\infty}$-operad, $\mathcal{O}$, there is a unique map of operads $\mathcal{O}\rightarrow \mathcal{J}$ into the trivial operad, which induces a map of monads $O\rightarrow J$ such that $O(X)$ is naturally weakly homotopy equivalent to $J(X)$. One observes that the map of monads $\mathsf{C}_1\rightarrow J$ induced by the unique map of $A_{\infty}$-operads $\mathcal{C}_1\rightarrow \mathcal{J}$, is compatible with the $C_2$-actions of Definitions \ref{equiv-james-defn} and \ref{equiv-c1-defn} and the $C_2$-weak homotopy equivalence follows.
\end{proof}

\subsection{Hyperoctahedral operad}

An \emph{$E_{\infty}$-operad} $\mathcal{O}$ consists of a collection of contractible spaces $\llb \mathcal{O}(n): n\geqslant 0\rrb$, such that there is a free right action of the symmetric group $\Sigma_n$ on $\mathcal{O}(n)$, $\mathcal{O}(0)=\star$, there is a unit element $1\in \mathcal{O}(1)$ and there exist composition maps
\[\gamma\colon \mathcal{O}(n)\times \mathcal{O}(k_1)\times \cdots \times \mathcal{O}(k_n)\rightarrow \mathcal{O}(k_1+\cdots + k_n)\]
satisfying Conditions 1--3 of \cite[Definition 1.1]{MayGILS}.

An \emph{$\mathcal{O}$-space} is a based topological space $X$ together with maps $\theta\colon \mathcal{O}(n)\times X^n\rightarrow X$ satisfying Conditions 1--3 of \cite[Lemma 1.4]{MayGILS}.

The monad associated to an $E_{\infty}$-operad, $\mathcal{O}$, is the functor $O\colon \mathbf{Top}_{\star}\rightarrow \mathbf{Top}_{\star}$  defined on objects by
\[O(X)= \frac{\coprod_{n\geqslant 0} \mathcal{O}(n)\times_{\Sigma_n} X^n}{\approx}\]
where $\approx$ denotes the subspace generated by the equivalence relation induced from the insertion and deletion of basepoints. That is, it is the functor constructed from $\mathcal{O}$ via \cite[Construction 2.4]{MayGILS}.

One example of an $E_{\infty}$-operad is the Barratt-Eccles operad \cite{BE} (see also \cite[1.1]{BeFr}). The $n^{th}$ space of the Barratt-Eccles operad is $E\Sigma_n$, the total space of the symmetric group $\Sigma_n$. We define the analogous $E_{\infty}$-operad where the symmetric groups are replaced with the hyperoctahedral groups.

\begin{defn}
Let $\H$ denote the \emph{hyperoctahedral operad}. For $n\geqslant 1$, let $\H(n)=EH_n$, the total space of $H_n$. Analogously to the Barratt-Eccles operad, the composition operations are determined by the natural maps
\[H_r\times \left(H_{n_1}\times \cdots \times H_{n_r}\right) \rightarrow H_{n_1+\cdots +n_r}\]
where the elements of the $H_{n_i}$ act in blocks and the element of $H_r$ acts on the blocks. The right action of $\Sigma_n$ on $\H(n)$ is the diagonal action.
\end{defn}

\begin{defn}
Let $Z\colon \mathbf{Top}_{\star} \rightarrow \mathbf{Top}_{\star}$ denote the monad associated to the hyperoctahedral operad $\H$.
\end{defn}

\begin{rem} 
\label{e-infty-rem}
Since $\mathcal{H}$ is an $E_{\infty}$-operad, there exists a natural weak homotopy equivalence $Z(X) \simeq \mathsf{C}_{\infty}(X)$ for each $X\in \mathbf{Top}_{\star}$, where $\mathsf{C}_{\infty}$ is the monad associated to the little $\infty$-cubes operad of \cite[Section 4]{MayGILS}.
\end{rem}

\subsection{Hyperoctahedral bar construction for monoids}

\begin{defn}
Let $M$ be a monoid with involution. We define a functor $\HBM\colon \DHP\rightarrow C_2\mhyphen\mathbf{Set}_{\star}$ on objects by
\[\HBM\left([n]\right)=\begin{cases}
M^{n+1} & n\geqslant 0\\
\emptyset & n=-1.
\end{cases}\]
where $M^{n+1}$ denotes the $(n+1)$-fold Cartesian product. $\HBM$ is defined on morphisms in $\HC$ analogously to $\HB$. We define $\HBM\left(i_n\right)$ to be the unique map $\emptyset \rightarrow M^{n+1}$. We call $\HBM$ the \emph{hyperoctahedral bar construction for monoids}.
\end{defn}

\begin{lem}
Let $M$ be a monoid with involution. The hyperoctahedral homology of the monoid algebra $k[M]$ is isomorphic to the homology of the simplicial set $N_{\star}\left(-\downarrow \DHP\right)\times_{\DHP} \HBM$. 
\end{lem}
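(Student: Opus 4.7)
The plan is to use the identification $HO_{\star}(A) \cong H_{\star}(\DHP, \HBP)$ established at the end of Section \ref{exten-sec}, applied to the involutive $k$-algebra $A = k[M]$, and then match the chain complex from Definition \ref{nerve-complex} computing $H_{\star}(\DHP, \HBP)$ with the free $k$-module on the simplicial set $N_{\star}(-\downarrow \DHP)\times_{\DHP}\HBM$. The statement is essentially formal, so the proof is really a matter of confirming that linearization and the coend commute.

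First I would observe that for $A = k[M]$ there are natural isomorphisms of $k$-modules
\[
\HBP([n]) = k[M]^{\otimes n+1} \cong k[M^{n+1}] = k[\HBM([n])],
\]
and that these isomorphisms are compatible with morphisms of $\DHP$: every structure map in $\HBP$ (permutation of factors, application of the involution, multiplication, and insertion of the unit $1_k$ via the morphisms $i_n$) is the $k$-linear extension of the corresponding set-theoretic operation in $\HBM$, where the involution on $k[M]$ is the $k$-linear extension of the monoid involution. This gives a natural isomorphism of functors $\HBP \cong k[-]\circ \HBM$, where $k[-]$ is composed with the forgetful functor $C_2\mhyphen\mathbf{Set}_{\star} \to \mathbf{Set}$.

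Next I would exploit that $k[-]\colon \mathbf{Set} \to \kmod$ is left adjoint to the forgetful functor and therefore preserves all colimits, together with its strong symmetric monoidal structure $k[X\times Y] \cong k[X] \otimes_k k[Y]$. Since the bifunctor $-\otimes_{\DHP}-$ is computed as a coend, these properties combine to produce a natural isomorphism of simplicial $k$-modules
\[
k\lb N_{\star}(-\downarrow \DHP) \times_{\DHP} \HBM\rb \cong k\lb N_{\star}(-\downarrow \DHP)\rb \otimes_{\DHP} k\lb \HBM\rb \cong k\lb N_{\star}(-\downarrow \DHP)\rb \otimes_{\DHP} \HBP.
\]
The homology of the left-hand side is by definition the homology of the simplicial set in the statement, while the right-hand side is precisely the chain complex of Definition \ref{nerve-complex} whose homology is $H_{\star}(\DHP, \HBP) \cong HO_{\star}(k[M])$ by the theorem of Section \ref{exten-sec}.

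The main obstacle is pure bookkeeping: verifying that the natural isomorphism $\HBP \cong k[-]\circ \HBM$ is fully compatible with the $\DHP$-action, and that the resulting isomorphism of chain complexes intertwines the face and degeneracy maps induced by the nerve. Both verifications reduce to the strong symmetric monoidal structure of $k[-]$ together with functoriality of the nerve construction, so no genuinely new argument is required.
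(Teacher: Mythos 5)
Your proposal is correct and follows essentially the same route as the paper: the paper's proof is precisely the observation that the free $k$-module functor turns coproducts into direct sums and products into tensor products, so that linearizing the simplicial set $N_{\star}\left(-\downarrow \DHP\right)\times_{\DHP}\HBM$ yields the chain complex $k\left[N_{\star}\left(-\downarrow \DHP\right)\right]\otimes_{\DHP}\mathsf{H}_{k[M]}$ computing $HO_{\star}\left(k[M]\right)$. The only difference is that you make explicit the strong monoidal/colimit-preservation argument and the appeal to the theorem of Section \ref{exten-sec}, which the paper leaves implicit.
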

\begin{proof}
Recall that the free $k$-module functor sends coproducts to direct sums and products to tensor products. One observes that applying the free $k$-module functor to the simplicial set $N_{\star}\left(-\downarrow \DHP\right)\times_{\DHP} \HBM$ we obtain the chain complex $k\left[N_{\star}\left(-\downarrow \DHP\right)\right]\otimes_{\DHP} \mathsf{H}_{k[M]}$ whose homology calculates the hyperoctahedral homology of the monoid algebra $k[M]$.
\end{proof}

\begin{rem}
\label{hocolim-rem}
By considering $M$ equipped with the discrete topology we note that the functor $\HBM$ lands in the category $\C_2\mhyphen\mathbf{Top}_{\star}$ and $N_{\star}\left(-\downarrow \DHP\right)\times_{\DHP} \HBM$ is a simplicial space whose geometric realization is homeomorphic to the homotopy colimit of $\HBM$. That is,
\[HO_{\star}\left(k[M]\right)\cong H_{\star}\left(\hocolim\, \HBM \right).\]
For the rest of this section all homotopy colimits will be taken over the category $\DHP$. In order to ease the typesetting we will henceforth omit the category from the notation.
\end{rem}

\subsection{Technical lemmata}
\label{technical}
Fiedorowicz's theorem, Theorem \ref{monoid-thm}, will relate the hyperoctahedral homology of an involutive monoid algebra of a discrete monoid to the homology of the two-sided bar construction, involving the $C_2$-equivariant little intervals monad $\mathsf{C}_1$ and the monad $\mathsf{C}_{\infty}$ associated to the little $\infty$-cubes operad. The method of proof involves replacing an involutive monoid with a resolution in terms of the bar construction and the $C_2$-equivariant James construction. One applies the hyperoctahedral bar construction and the homotopy colimit and analyses the result. 

In order to prove this theorem we must first analyse the homotopy colimit of the functor $\mathsf{H}_{J(X)}\colon \DHP\rightarrow C_2\mhyphen\mathbf{Top}_{\star}$ for a discrete based $C_2$-space.  The homotopy colimit is the geometric realization of the simplicial space $N_{\star}\left(-\downarrow \DHP\right) \times_{\DHP} \mathsf{H}_{J(X)}$. We will show that this homotopy colimit can be expressed in terms of the monad $Z$ associated to the hyperoctahedral operad.

For ease of indexing we will denote the empty set by $[-1]$ throughout this subsection. For a based topological space $X$, we define $X^0$ to be the basepoint of $X$.

The following lemma shows that we can express the homotopy colimit of the functor $\mathsf{H}_{J(X)}$ in terms of the underlying space $X$.

\begin{lem}
\label{technical-lem-1}
Let $X\in C_2\mhyphen \mathbf{Top}_{\star}$ have the discrete topology. There is an isomorphism of simplicial spaces
\[N_{\star}\left(-\downarrow \DHP\right) \times_{\DHP} \mathsf{H}_{J(X)}\cong \frac{\coprod_{n\geqslant -1} N_{\star}\left([n]\downarrow \DHP\right) \times_{H_{n+1}} X^{n+1}}{\approx} \]
where $\approx$ is the subspace generated by the equivalence relation induced from the insertion and deletion of basepoints.
\end{lem}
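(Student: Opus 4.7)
The plan is to construct explicit mutually-inverse simplicial maps between the two sides, built around the inclusion $X^{n+1}\hookrightarrow J(X)^{n+1}$ that regards a tuple in $X^{n+1}$ as a tuple of length-one words in $J(X)$ (and $X^0\hookrightarrow J(X)^0$ as the basepoint, for $n=-1$). This inclusion induces a natural map
\[\Phi\colon \coprod_{n\geqslant -1}N_\star\left([n]\downarrow\DHP\right)\times X^{n+1}\longrightarrow N_\star\left(-\downarrow\DHP\right)\times_{\DHP}\mathsf{H}_{J(X)}\]
sending $\bigl((f_p,\dotsc,f_0),(z_0,\dotsc,z_n)\bigr)$ to the class $\left[(f_p,\dotsc,f_0)\otimes(z_0,\dotsc,z_n)\right]$. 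I would first check that $\Phi$ descends to the quotient by the $H_{n+1}$-action: for $g\in H_{n+1}$, the coend relation of $-\otimes_{\DHP}-$ identifies $[(f_p,\dotsc,f_0\circ g)\otimes(z_0,\dotsc,z_n)]$ with $[(f_p,\dotsc,f_0)\otimes\mathsf{H}_{J(X)}(g)(z_0,\dotsc,z_n)]$, and $\mathsf{H}_{J(X)}(g)$ restricted to $X^{n+1}\subset J(X)^{n+1}$ is precisely the natural $H_{n+1}$-action by permutation and the involution on $X$. Similarly, $\Phi$ descends through the basepoint relation $\approx$: inserting the basepoint of $X$ at position $i$ corresponds to inserting the unit of $J(X)$, which equals $\mathsf{H}_{J(X)}(\delta_i,id)$ applied to the shorter tuple, and the coend relation then absorbs $\delta_i$ into $f_0$, matching the $\approx$-identification.

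For the inverse, represent a class on the left by $[(f_p,\dotsc,f_0)\otimes(w_0,\dotsc,w_x)]$ and take the unique reduced representative of each $w_i\in J(X)$, of length $\ell_i$. Setting $n+1=\sum\ell_i$, let $\phi\in\mathrm{Hom}_{\Delta}([n],[x])$ be the order-preserving map with $|\phi^{-1}(i)|=\ell_i$ and let $(z_0,\dotsc,z_n)\in X^{n+1}$ be the concatenation of the letters. Define
\[\Psi\bigl[(f_p,\dotsc,f_0)\otimes(w_0,\dotsc,w_x)\bigr]=\bigl[\bigl((f_p,\dotsc,f_0\circ(\phi,id_n)),(z_0,\dotsc,z_n)\bigr)\bigr].\]
That $\Phi\circ\Psi=\mathrm{id}$ follows from the coend relation together with the identity $\mathsf{H}_{J(X)}(\phi,id_n)(z_0,\dotsc,z_n)=(w_0,\dotsc,w_x)$ on order-preserving maps. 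For $\Psi\circ\Phi=\mathrm{id}$ I would use that if a tuple in $X^{n+1}$ has a basepoint entry at position $i$, then its reduced $J(X)$-expansion omits that coordinate and precomposes $f_0$ with the inclusion $\delta_i$, recovering the original class via $\approx$. Simpliciality of both maps is automatic because the face and degeneracy maps act only on the nerve string $(f_p,\dotsc,f_0)$ and not on the tuple factor, and $\Phi$ leaves the string unchanged while $\Psi$ only precomposes $f_0$ with a morphism of $\DHP$.

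The main technical obstacle will be the well-definedness of $\Psi$ under the coend relation: for a general morphism $(\psi,h)\in\mathrm{Hom}_{\DHP}([y],[x])$, one must verify that the values of $\Psi$ on the two representatives $[(f_p,\dotsc,f_0\circ(\psi,h))\otimes(u_0,\dotsc,u_y)]$ and $[(f_p,\dotsc,f_0)\otimes\mathsf{H}_{J(X)}(\psi,h)(u_0,\dotsc,u_y)]$ coincide in the right-hand side. This reduces to the combinatorial observation that the hyperoctahedral action on the flattened $X^{n+1}$---permuting blocks of letters according to the underlying permutation of $h$ and reversing each block while applying the involution on $X$ according to the cyclic labels---matches exactly the permutation needed to reconcile the two orders of expansion, the discrepancy being absorbed into the $H_{n+1}$-quotient. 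The convention of Definition \ref{equiv-james-defn} that the involution on $J(X)$ reverses letter order is essential here, since without this reversal the block-by-block expansion would fail to be equivariant for the cyclic component of $H_{y+1}$.
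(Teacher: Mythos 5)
Your proposal is correct and follows essentially the same route as the paper's (sketched) proof: both arguments rest on unique reduced-word representatives in $J(X)$, the factorization of morphisms of $\DHP$ as (injective order-preserving) $\circ$ (surjective order-preserving) $\circ$ (hyperoctahedral automorphism), and the compatibility of the hyperoctahedral block action --- with the letter-reversal convention of the equivariant James construction --- with flattening. You merely package the paper's ``canonical form in the successive quotients'' argument as a pair of explicit mutually inverse simplicial maps, and you correctly isolate the automorphism case of the coend relation as the one nontrivial verification.
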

\begin{proof}
The technical content of this proof is analogous to \cite[Lemma 33]{Ault}. We will motivate the proof here. By definition, the left hand side is equal to
\[\frac{\coprod_{n\geqslant -1} N_{\star}\left([n]\downarrow \DHP\right) \times J(X)^{n+1}}{\llangle G(\alpha)(x)\otimes y - x\otimes F(\alpha)(y)\rrangle}\]
where $\llangle G(\alpha)(x)\otimes y - x\otimes F(\alpha)(y)\rrangle$ is the subspace generated by the relations on $\DHP$ morphisms, analogously to Definition \ref{tensor-obj}. 

A morphism in $\DHP$ can be written uniquely as a composite $\delta \circ s \circ g$, where $g$ is an automorphism, that is an element of the hyperoctahedral group, $s$ is a surjective order-preserving map and $\delta$ is an injective order-preserving map. 

Taking the quotient of 
\[\coprod_{n\geqslant -1} N_{\star}\left([n]\downarrow \DHP\right) \times J(X)^{n+1}\]
by the subspace generated by the relation for automorphisms we obtain
\[V=\coprod_{n\geqslant -1} N_{\star}\left([n]\downarrow \DHP\right) \times_{H_{n+1}} J(X)^{n+1}.\]
Extending the argument of \cite[Lemma 33]{Ault} to the hyperoctahedral case one can show that any element of the quotient space of $V$ by the subspace generated by the relation for order-preserving surjections can be expressed uniquely as an element of
\[\coprod_{n\geqslant -1} N_{\star}\left([n]\downarrow \DHP\right) \times_{H_{n+1}} X^{n+1}.\]
Finally, one observes that the subspace generated by the relation for injective order-preserving maps is precisely the subspace generated by the relation induced from the insertion and deletion of basepoints.
\end{proof}

\begin{lem}
\label{nerve-equiv}
There exist weak homotopy equivalences of simplicial sets $E_{\star}\Sigma_{n+1} \rightarrow E_{\star} H_{n+1}$ and $E_{\star}H_{n+1}\rightarrow N_{\star}\left([n]\downarrow \DHP\right)$.
\end{lem}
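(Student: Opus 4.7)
\emph{Proof plan.} Both equivalences will follow by exhibiting explicit simplicial maps and identifying both source and target as contractible simplicial sets, whence any map between them is automatically a weak homotopy equivalence. For the first map, the inclusion of groups $\Sigma_{n+1}\hookrightarrow H_{n+1}$ sending $\sigma$ to $\left(1,\dotsc ,1;\sigma\right)$ induces a functor $\mathbf{E}\Sigma_{n+1}\to \mathbf{E}H_{n+1}$ on translation groupoids, hence a simplicial map $E_{\star}\Sigma_{n+1}\to E_{\star}H_{n+1}$. Any category of the form $\mathbf{E}G$ has a unique morphism between any pair of objects, so every object is simultaneously initial and terminal. The nerve of a category with an initial object is contractible (the unique morphisms out of the initial object assemble into a natural transformation from the constant functor to the identity functor, inducing a simplicial homotopy from the constant map to the identity), so both $E_{\star}\Sigma_{n+1}$ and $E_{\star}H_{n+1}$ are contractible.

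For the second map, define a functor $F\colon \mathbf{E}H_{n+1}\to \left([n]\downarrow \DHP\right)$ by sending $g\in H_{n+1}$ to the automorphism $g\colon [n]\to [n]$, regarded as an object of the under-category, and by sending the unique morphism $g\to h$ in $\mathbf{E}H_{n+1}$ to $h\circ g^{-1}\colon [n]\to [n]$, regarded as a morphism in $\left([n]\downarrow \DHP\right)$ from $g$ to $h$; the equation $\left(h\circ g^{-1}\right)\circ g = h$ makes this well-defined. Preservation of identities and composition is immediate: the identity $g\to g$ maps to $g\circ g^{-1}=id_n$, the identity on $g$ in the under-category, and composing the images of $g\to h$ and $h\to k$ gives $\left(k\circ h^{-1}\right)\circ\left(h\circ g^{-1}\right)=k\circ g^{-1}$, which is the image of the composite $g\to k$. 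Thus $F$ is a functor and induces a simplicial map $E_{\star}H_{n+1}\to N_{\star}\left([n]\downarrow \DHP\right)$.

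To finish, it suffices to show $N_{\star}\left([n]\downarrow \DHP\right)$ is contractible. Note that $id_n\colon [n]\to[n]$ is an initial object of $\left([n]\downarrow \DHP\right)$: for any object $f\colon [n]\to [m]$, a morphism $id_n\to f$ is an $\alpha\colon [n]\to [m]$ in $\DHP$ satisfying $\alpha\circ id_n=f$, which forces $\alpha=f$, so there is a unique such morphism. Applying the nerve-contractibility argument above then yields the result. The only substantive checks are the functoriality of $F$ and the initial-object property of $id_n$; both are elementary, so no serious obstacle is anticipated.
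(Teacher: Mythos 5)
Your proposal is correct and takes essentially the same route as the paper: the paper likewise observes that all three simplicial sets are connected and contractible (so any map between them is a weak equivalence), uses the map induced by the inclusion $\Sigma_{n+1}\hookrightarrow H_{n+1}$ for the first equivalence, and for the second uses the simplex-level formula $\left(g_1g_0^{-1},\dotsc\right)\mapsto\left(g_0,g_1g_0^{-1},\dotsc\right)$, which is exactly the nerve of your functor $F$. Your added detail (contractibility via initial objects of $\mathbf{E}G$ and of $\left([n]\downarrow\DHP\right)$) just makes explicit what the paper leaves implicit.
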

\begin{proof}
All three simplicial sets are connected and contractible. The first weak homotopy equivalence is determined by the inclusion of the symmetric group into the hyperoctahedral group. The second is determined by 
\[\left(g_1g_0^{-1},\dotsc , g_ng_{n-1}^{-1}\right) \mapsto \left(g_0, g_1g_0^{-1},\dotsc , g_ng_{n-1}^{-1}\right)\]
in degree $n$.
\end{proof}

The following lemma demonstrates that the homotopy colimit of the functor $\mathsf{H}_{J(X)}$ is weakly homotopy equivalent to the monad $Z$ of the hyperoctahedral operad applied to the based topological space $EC_{2+}\wedge_{C_2} X$.

\begin{lem}
\label{technical-lem-2}
Let $X\in C_2\mhyphen \mathbf{Top}_{\star}$ have the discrete topology. There is a natural weak homotopy equivalence of simplicial spaces
\[ \frac{\coprod_{n\geqslant -1} N_{\star}\left([n]\downarrow \DHP\right) \times_{H_{n+1}} X^{n+1}}{\approx} \simeq  \frac{\coprod_{n\geqslant -1} E_{\star}H_{n+1} \times_{\Sigma_{n+1}} \left(EC_{2+}\wedge_{C_2}X\right)^{n+1}}{\approx}\]
where $\approx$ is the subspace generated by the equivalence relation induced from the insertion and deletion of basepoints.
\end{lem}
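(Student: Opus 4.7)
The plan is to exploit the semi-direct product decomposition $H_{n+1}=C_2^{n+1}\rtimes\Sigma_{n+1}$ and to realise the Borel-style orbit construction $(-)\times_{H_{n+1}}X^{n+1}$ as an iterated quotient: first kill the normal subgroup $C_2^{n+1}$, which is where the factor $EC_{2+}\wedge_{C_2}X$ will emerge, and then kill the residual $\Sigma_{n+1}$. The first move is to use the second weak equivalence of Lemma~\ref{nerve-equiv} to replace $N_{\star}([n]\downarrow\DHP)$ by $E_{\star}H_{n+1}$ degree-wise; both are free, contractible $H_{n+1}$-simplicial sets and the map is $H_{n+1}$-equivariant, so forming $(-)\times_{H_{n+1}}X^{n+1}$ and passing to $\approx$-quotients preserves the weak equivalence. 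The problem reduces to producing a natural weak equivalence
\[\frac{\coprod_{n\geqslant -1} E_{\star}H_{n+1} \times_{H_{n+1}} X^{n+1}}{\approx} \simeq \frac{\coprod_{n\geqslant -1} E_{\star}H_{n+1} \times_{\Sigma_{n+1}} \left(EC_{2+}\wedge_{C_2}X\right)^{n+1}}{\approx}.\]

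Next, I would fix the standard model $E_{\star}H_{n+1}\simeq (E_{\star}C_2)^{n+1}\times E_{\star}\Sigma_{n+1}$, in which $C_2^{n+1}$ acts factor-wise on the first factor and trivially on the second while $\Sigma_{n+1}$ permutes the $C_2$-factors of the first and acts by right multiplication on the second. Since $C_2^{n+1}$ acts trivially on $E_{\star}\Sigma_{n+1}$ and factor-wise on both $(E_{\star}C_2)^{n+1}$ and $X^{n+1}$, taking the $C_2^{n+1}$-quotient of $E_{\star}H_{n+1}\times X^{n+1}$ gives
\[\bigl((E_{\star}C_2\times X)/C_2\bigr)^{n+1}\times E_{\star}\Sigma_{n+1},\]
and the remaining $\Sigma_{n+1}$-quotient then produces
\[E_{\star}\Sigma_{n+1}\times_{\Sigma_{n+1}}\bigl((E_{\star}C_2\times X)/C_2\bigr)^{n+1}.\]
Applying the first weak equivalence of Lemma~\ref{nerve-equiv} to replace $E_{\star}\Sigma_{n+1}$ by $E_{\star}H_{n+1}$ (an equivariant weak equivalence of free $\Sigma_{n+1}$-simplicial sets) puts this in the shape of the right-hand side, up to identification of the coefficients.

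The final step is to identify $(E_{\star}C_2\times X)/C_2$ with $EC_{2+}\wedge_{C_2}X$ at the level of the outer monadic construction. These differ by the quotient map collapsing the ``fat basepoint'' subspace $E_{\star}C_2\times_{C_2}\{*\}\cong BC_2$, coming from the basepoint of $X$, to a single point. The point to verify is that under the $\approx$-relation any slot whose value lies in this fat basepoint subspace can be deleted exactly as a genuine basepoint-valued slot can; thus pre-collapsing the fat basepoint to the single basepoint of $EC_{2+}\wedge_{C_2}X$ does not alter the $\approx$-quotient taken over the whole coproduct. Stringing the three steps together yields the required natural weak homotopy equivalence.

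I expect the main obstacle to be precisely the bookkeeping for $\approx$ in this last step. One must check that the collapse $(E_{\star}C_2\times X)/C_2\twoheadrightarrow EC_{2+}\wedge_{C_2}X$ induces an isomorphism, not merely a weak equivalence, after the $\approx$-quotient is formed over all $n\geqslant -1$, and that the basepoint contributed by the extra ``$+$'' in $EC_{2+}$ on the right corresponds correctly, slot by slot, to the combined $C_2$- and basepoint-identifications on the left. The semi-direct product manipulations and the replacements via Lemma~\ref{nerve-equiv} are formal once the correct model for $E_{\star}H_{n+1}$ is chosen; it is the compatibility of the monadic basepoint relation with the Borel construction that requires real care.
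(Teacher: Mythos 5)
Your argument is correct, but it takes a genuinely different route from the paper's. The paper proceeds by first replacing $X$ with $EC_{2+}\wedge X$ (using that the projection $EC_{2+}\wedge X\to X$ is a weak equivalence), then replacing $N_{\star}([n]\downarrow\DHP)$ by $E_{\star}H_{n+1}$ via Lemma \ref{nerve-equiv}, and then using freeness of the resulting $H_{n+1}$-action on $\left(EC_{2+}\wedge X\right)^{n+1}$ to trade the Borel-type construction for the strict orbit space, shuffle the quotient into $\left(EC_{2+}\wedge_{C_2}X\right)^{n+1}/\Sigma_{n+1}$, and reinstate $E_{\star}\Sigma_{n+1}$ before applying Lemma \ref{nerve-equiv} again. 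You instead keep the $\Sigma_{n+1}$-homotopy quotient throughout and untwist the $C_2$-directions strictly, via the canonical identification $E_{\star}H_{n+1}\cong (E_{\star}C_2)^{n+1}\times E_{\star}\Sigma_{n+1}$ (which is an honest $H_{n+1}$-equivariant isomorphism, since $\mathbf{E}G$ depends only on the underlying set, so your iterated-quotient computation by $\cyc^{n+1}$ and then $\Sigma_{n+1}$ is exact rather than homotopical). This buys you something real: you never need the paper's assertion that the $H_{n+1}$-action on the product is free, an assertion that is delicate at points with repeated coordinates fixed by permutations; your only homotopical inputs are the two equivalences of Lemma \ref{nerve-equiv} applied to homotopy quotients. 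The price is the terminal comparison of $(E_{\star}C_2\times X)/C_2$ with $EC_{2+}\wedge_{C_2}X$, i.e.\ collapsing the fat basepoint $BC_2$, which the paper avoids by smashing with $EC_{2+}$ at the outset. Your justification of that step is sound: under the relation $\approx$, a slot with $X$-coordinate the basepoint is deleted by a structure map that forgets that slot's $E_{\star}C_2$-coordinate, so any two fat-basepoint values in a slot are already $\approx$-identified and the collapse induces a bijection (in each simplicial degree everything is discrete, so this is an isomorphism) on the $\approx$-quotients. Note that both your route and the paper's defer the same kind of check, namely compatibility of the levelwise equivalences with the deletion structure maps across $n$ that generate $\approx$; the paper dismisses this as routine, and your proposal correctly identifies it, together with the fat-basepoint bookkeeping, as the place where care is required.
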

\begin{proof}
Let
\[S=\coprod_{n\geqslant -1} N_{\star}\left([n]\downarrow \DHP\right) \times_{H_{n+1}} X^{n+1}.\]

The total space $EC_{2+}$ is a contractible space with a free $C_2$-action and so the projection map $EC_{2+}\wedge X\rightarrow X$ is a weak homotopy equivalence. That is, we can replace $X$ with a free $C_2$-space. Furthermore, by Lemma \ref{nerve-equiv}, we can replace the nerve of the under-category with the total spaces of the hyperoctahedral groups. We obtain weak homotopy equivalences
\[S\simeq \coprod_{n\geqslant -1} N_{\star}\left([n]\downarrow \DHP\right) \times_{H_{n+1}} \left(EC_{2+}\wedge X\right)^{n+1} \simeq \coprod_{n\geqslant -1} E_{\star}H_{n+1} \times_{H_{n+1}} \left(EC_{2+}\wedge X\right)^{n+1}.\]

We observe that the action of the hyperoctahedral group $H_{n+1}$ on $\left(EC_{2+}\wedge X\right)^{n+1}$ is free. Let $\left(z_0,\dotsc ,z_n; \sigma\right)\in H_{n+1}$. This group element acts firstly by applying the free $\cyc$-action point-wise in the product, according to the elements $z_0,\dotsc ,z_n\in \cyc$. It then freely permutes the factors of the product according to the permutation $\sigma\in \Sigma_{n+1}$. By the properties of free group actions we observe that
\begin{small}
\[S\simeq \coprod_{n\geqslant -1} \frac{\left(EC_{2+}\wedge X\right)^{n+1}}{H_{n+1}} \simeq \coprod_{n\geqslant -1} \frac{\left(EC_{2+}\wedge_{C_2} X\right)^{n+1}}{\Sigma_{n+1}}\simeq \coprod_{n\geqslant -1} E_{\star}\Sigma_{n+1} \times_{\Sigma_{n+1}} \left(EC_{2+}\wedge_{C_2} X\right)^{n+1} .\]
\end{small}

By Lemma \ref{nerve-equiv} we can replace the total spaces of the symmetric groups with the total spaces of the hyperoctahedral groups:
\[S \simeq \coprod_{n\geqslant -1} E_{\star}H_{n+1} \times_{\Sigma_{n+1}} \left(EC_{2+}\wedge_{C_2} X\right)^{n+1}.\]

A routine check shows that these weak homotopy equivalences are compatible with taking the quotient by the subspace generated by the equivalence relation induced from the insertion and deletion of basepoints.
\end{proof}

\begin{cor}
\label{Z-cor}
Let $X\in C_2\mhyphen\mathbf{Top}_{\star}$ have the discrete topology. There are natural weak homotopy equivalences of based topological spaces
\[\mathrm{hocolim}\, \mathsf{H}_{J(X)} \simeq Z\left(EC_{2+}\wedge_{C_2} X\right)\simeq\mathsf{C}_{\infty}\left(EC_{2+}\wedge_{C_2} X\right)\]
where $Z$ is the monad associated to the hyperoctahedral operad and $\mathsf{C}_{\infty}$ is the monad associated to the little $\infty$-cubes operad.
\end{cor}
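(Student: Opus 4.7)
My plan is to chain together the two technical lemmata together with the definition of the hyperoctahedral monad and Remark \ref{e-infty-rem}, so that only a reindexing and an identification of quotient relations remain to check.

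First, I would invoke Remark \ref{hocolim-rem} to identify $\mathrm{hocolim}\, \mathsf{H}_{J(X)}$ with the geometric realization of the simplicial space $N_\star(-\downarrow\DHP)\times_{\DHP}\mathsf{H}_{J(X)}$. Applying Lemma \ref{technical-lem-1} rewrites this simplicial space as
\[
\frac{\coprod_{n\geqslant -1} N_\star\bigl([n]\downarrow\DHP\bigr)\times_{H_{n+1}} X^{n+1}}{\approx},
\]
and then Lemma \ref{technical-lem-2} provides a natural weak homotopy equivalence from this to
\[
\frac{\coprod_{n\geqslant -1} E_\star H_{n+1}\times_{\Sigma_{n+1}}\bigl(EC_{2+}\wedge_{C_2}X\bigr)^{n+1}}{\approx}.
\]
Since geometric realization commutes with products, colimits and quotients by the basepoint relation, applying it termwise yields a space of the form $\coprod_{m\geqslant 0} EH_m\times_{\Sigma_m}Y^m/\approx$ where $Y=EC_{2+}\wedge_{C_2}X$ and we have reindexed by $m=n+1$ (the degenerate $m=0$ summand contributing only the basepoint, consistently with $\mathcal{H}(0)=\star$).

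By the definition of the hyperoctahedral operad $\mathcal{H}$, whose $m^{\text{th}}$ space is $EH_m$ with diagonal $\Sigma_m$-action, and the construction of its associated monad $Z$ following \cite[Construction 2.4]{MayGILS}, this coproduct is exactly $Z(EC_{2+}\wedge_{C_2}X)$. This establishes the first weak homotopy equivalence. The second equivalence is precisely the content of Remark \ref{e-infty-rem}, applied to the space $EC_{2+}\wedge_{C_2}X$. Naturality in $X$ is inherited from the naturality of each of the constituent steps.

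The only real subtlety I anticipate is bookkeeping at the extremes: verifying that the equivalence relation $\approx$ arising from Lemma \ref{technical-lem-1} (inherited from injective order-preserving maps and the initial object of $\DHP$) coincides with the basepoint identifications used to define the monad $Z$, and that the $n=-1$ term plays the role of $\mathcal{H}(0)=\star$. This is routine once one unpacks the definitions, so no genuine obstacle remains beyond careful reindexing.
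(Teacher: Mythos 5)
Your proposal is correct and follows essentially the same route as the paper: the paper's proof likewise obtains the first equivalence by applying Lemma \ref{technical-lem-2} (after Lemma \ref{technical-lem-1}) and taking geometric realization, with the identification of the resulting quotient as $Z\left(EC_{2+}\wedge_{C_2}X\right)$ left implicit in the definition of the monad, and the second equivalence by Remark \ref{e-infty-rem}. Your extra care about the reindexing $m=n+1$ and the $m=0$ summand is a correct elaboration of what the paper leaves unstated.
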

\begin{proof}
The first weak homotopy equivalence follows from Lemma \ref{technical-lem-2} upon taking geometric realization. The second follows from Remark \ref{e-infty-rem}.
\end{proof}

\begin{lem}
\label{bar-con-lemma}
Let $M$ be a monoid with involution. The weak homotopy equivalence of Corollary \ref{Z-cor} induces a weak homotopy equivalence
\[B_{\star}\left(\mathrm{hocolim}\, \mathsf{H}_{J(-)}, J ,M\right) \rightarrow B_{\star}\left( Z\left(EC_{2+}\wedge_{C_2} - \right), J ,M\right).\]
\end{lem}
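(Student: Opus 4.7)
The plan is to establish the map as a levelwise weak homotopy equivalence of simplicial spaces and then invoke a standard realization lemma. In simplicial degree $n$ the two-sided bar construction gives
\[
B_n\bigl(\mr{hocolim}\,\mathsf{H}_{J(-)}, J, M\bigr) = \mr{hocolim}\,\mathsf{H}_{J(J^n M)} \text{ and } B_n\bigl(Z(EC_{2+}\wedge_{C_2} -), J, M\bigr) = Z\bigl(EC_{2+}\wedge_{C_2} J^n M\bigr).
\]
Because $M$ is discrete and the James construction is a quotient of a coproduct of finite Cartesian powers, each iterated $J^n M$ is again a discrete based $C_2$-space. Consequently Corollary \ref{Z-cor} applied with $X = J^n M$ produces the required weak homotopy equivalence in simplicial degree $n$.

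The next step is to show that these levelwise equivalences assemble into a map of simplicial spaces. This reduces to verifying that the equivalence of Corollary \ref{Z-cor} is natural in $X$ when restricted to discrete based $C_2$-spaces. Inspecting its derivation, Lemma \ref{technical-lem-1} is manifestly functorial in $X$; the projection $EC_{2+}\wedge X \to X$ underpinning Lemma \ref{technical-lem-2} is natural in $X$; and the nerve replacements of Lemma \ref{nerve-equiv} are independent of $X$ altogether. The face maps of $B_\star(-, J, M)$ on each side are induced by applying $\mu\colon JJ\to J$ and $\xi\colon JM \to M$ inside $J^n M$, and the degeneracies by inserting $\eta\colon \mathbf{1}\to J$. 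All of these are morphisms between discrete based $C_2$-spaces, so naturality upgrades the degreewise equivalence to a simplicial map.

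The conclusion will then follow from a standard realization lemma: a levelwise weak homotopy equivalence between proper simplicial spaces induces a weak equivalence on geometric realizations. Properness in both cases is guaranteed by the hypothesis that the unit of $M$ is a non-degenerate basepoint, so the degeneracies of $B_\star(-, J, M)$ are closed cofibrations in each simplicial degree.

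The hardest part will be a careful treatment of the $J$-functor structure on $Z(EC_{2+}\wedge_{C_2} - )$ and its strict compatibility with the equivalence of Corollary \ref{Z-cor}. Concretely, one must verify that the natural transformation $\lambda$ defining the $J$-functor structure on the target is matched under the chain of equivalences in Lemmas \ref{technical-lem-1} and \ref{technical-lem-2} with the $J$-functor structure on $\mr{hocolim}\,\mathsf{H}_{J(-)}$ coming from the monad multiplication of $J$ applied to the innermost occurrence of $J$. Once this compatibility is confirmed the rest of the argument is formal.
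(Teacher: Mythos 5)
Your proposal follows essentially the same route as the paper: apply the natural equivalence of Corollary \ref{Z-cor} levelwise at the discrete $C_2$-spaces $J^nM$ and use naturality to assemble a map of simplicial spaces, with the only delicate point being compatibility with the $0$-face (the $J$-functor structure), which you correctly single out and which is exactly the commuting square the paper exhibits. Note only that the lemma as stated concerns the simplicial spaces $B_{\star}$ themselves, so your final realization-lemma step is not needed here (it is deferred to the proof of Theorem \ref{monoid-thm}).
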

\begin{proof}
One observes that the only simplicial map in the bar construction that the equivalence interacts with is the $0$-face. Since the equivalence of Corollary \ref{Z-cor} is natural we have a commuting square
\begin{center}
\begin{tikzcd}
\mathrm{hocolim}\, \mathsf{H}_{J(J(M))}\arrow[r, "\simeq "] \arrow[d, "\p_0 ", swap]& Z\left(EC_{2+}\wedge_{C_2} J(M)\right)\arrow[d, "\p_0 "]\\
\mathrm{hocolim}\, \mathsf{H}_{J(M)}\arrow[r, "\simeq "] & Z\left(EC_{2+}\wedge_{C_2} M\right)
\end{tikzcd}
\end{center}
induced by the map $J(M)\rightarrow M$ as required.
\end{proof}

\subsection{Fiedorowicz's theorem for monoid algebras}

For ease of notation for the remainder of the paper let $\mathsf{E}$ denote the functor $EC_{2+}\wedge_{C_2}-$.

\begin{thm}
\label{monoid-thm}
Let $M$ be a discrete monoid with involution. There is an isomorphism of graded $k$-modules $HO_{\star}\left(k[M]\right)\cong H_{\star}\left(B\left(\mathsf{C}_{\infty}\mathsf{E}, \mathsf{C}_1 ,M\right)\right)$.
\end{thm}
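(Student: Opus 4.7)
By Remark \ref{hocolim-rem} the left-hand side equals $H_\star\left(\hocolim\, \HBM\right)$, so the plan is to exhibit a zig-zag of natural weak homotopy equivalences between the based topological spaces $\hocolim\, \HBM$ and $B(\mathsf{C}_\infty\mathsf{E}, \mathsf{C}_1, M)$ and then apply $H_\star(-)$. All the geometric input needed has been prepared in Section \ref{technical}; my job is to assemble it through the two-sided bar construction of \cite[Section 9]{MayGILS}.

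First I would consider the simplicial space $B_\star(\hocolim\, \mathsf{H}_{J(-)}, J, M)$. The multiplication $\mu\colon J\cdot J\Rightarrow J$ turns $\hocolim\, \mathsf{H}_{J(-)}$ into a $J$-functor, since $\mu_X$ is a morphism of topological monoids with involution for each $X\in C_2\mhyphen\mathbf{Top}_\star$ and $\mathsf{H}$ together with $\hocolim$ respects such morphisms. Because $M$ is a (discrete) $J$-algebra with structure map $\xi\colon JM\to M$, the unit $\eta$ of $J$ supplies an extra degeneracy on the augmented simplicial object, so that the augmentation $|B_\star(\hocolim\, \mathsf{H}_{J(-)}, J, M)|\xrightarrow{\simeq}\hocolim\, \HBM$ is a weak homotopy equivalence by the standard extra-degeneracy argument.

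Next I would apply Lemma \ref{bar-con-lemma}, which already packages the replacement $\hocolim\, \mathsf{H}_{J(-)}\simeq Z\mathsf{E}(-)$ of Corollary \ref{Z-cor} as an equivalence of simplicial spaces compatible with the face maps of the bar construction. Combined with the natural equivalence $Z\simeq \mathsf{C}_\infty$ of Remark \ref{e-infty-rem}, this produces a weak equivalence $|B_\star(\hocolim\, \mathsf{H}_{J(-)}, J, M)|\simeq |B_\star(\mathsf{C}_\infty\mathsf{E}, J, M)|$ after passing to geometric realizations, using that both simplicial spaces are proper (the assumption that $M$ is discrete ensures the degeneracies are closed cofibrations). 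Finally, the map of monads $\mathsf{C}_1\to J$ of Proposition \ref{c-1-prop} is a weak equivalence on every iterated application, and $M$ is a $\mathsf{C}_1$-algebra and $\mathsf{C}_\infty\mathsf{E}$ a $\mathsf{C}_1$-functor via this map, so the induced comparison $|B_\star(\mathsf{C}_\infty\mathsf{E}, \mathsf{C}_1, M)|\to |B_\star(\mathsf{C}_\infty\mathsf{E}, J, M)|$ is a weak equivalence. Chaining these three equivalences together and taking $H_\star$ yields the desired isomorphism.

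The principal obstacle lies in the first step: verifying rigorously that the augmentation is a weak equivalence. The standard extra-degeneracy argument for the two-sided bar construction requires the outer functor to genuinely preserve the relevant simplicial structure, so one must check that the operation $\hocolim\, \mathsf{H}_{(-)}$ interacts correctly with $\eta$ and with the composite $\xi\circ \eta_M$ in such a way that the simplicial contraction persists. I expect this to be a careful but routine diagram-chase patterned on \cite[Theorem 9.10]{MayGILS} and the analogous symmetric-homology argument in \cite[Section 9]{Ault}, but it is the technical core of the proof.
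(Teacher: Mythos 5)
Your proposal is correct and follows essentially the same route as the paper: identify $HO_{\star}(k[M])$ with the homology of $\mathrm{hocolim}\,\HBM$, resolve $M$ through the two-sided bar construction over the $C_2$-equivariant James construction, commute $\mathrm{hocolim}\,\mathsf{H}_{(-)}$ past geometric realization via \cite[Lemma 9.7]{MayGILS}, and then apply Corollary \ref{Z-cor}, Lemma \ref{bar-con-lemma}, Remark \ref{e-infty-rem} and Proposition \ref{c-1-prop}. The only divergence is that the "technical core" you flag (that the augmentation $B(J,J,M)\to M$ is an equivalence compatible with the involution) is dispatched in the paper by citing the $C_2$-equivariant statement \cite[Lemma 2.5]{Dunn} rather than by running the extra-degeneracy argument directly.
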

\begin{proof}
Recall the $C_2$-equivariant James construction $J$ from Definition \ref{equiv-james-defn}. By \cite[Lemma 2.5]{Dunn} there is a $C_2$-equivariant weak homotopy equivalence $B\left(J , J , M\right)\rightarrow M$, where $B\left(J , J ,M\right)$ is the geometric realization of the based simplicial $C_2$-space $B_{\star}\left(J ,J ,M\right)$ given by the two-sided bar construction. We therefore have isomorphisms
\[HO_{\star}\left(k[M]\right) \cong H_{\star}\left(\mathrm{hocolim} \, \HBM\right) \cong H_{\star}\left(\mathrm{hocolim}\, \mathsf{H}_{B\left(J,J,M\right)}\right).\]
By \cite[Lemma 9.7]{MayGILS} we can take the hyperoctahedral bar construction and the homotopy colimit inside the bar construction and apply Corollary \ref{Z-cor} to obtain, 
\[\mathrm{hocolim}\, \mathsf{H}_{ B\left(J,J,M\right)} \cong B\left(\mathrm{hocolim}\, \mathsf{H}_{J(-)},J,M\right)\simeq B\left(Z\mathsf{E},J,M\right).\]
Finally, there is a weak homotopy equivalence $B\left(Z\mathsf{E},J,M\right) \simeq B\left(\mathsf{C}_{\infty}\mathsf{E},\mathsf{C}_1,M\right)$ by Corollary \ref{Z-cor} and Proposition \ref{c-1-prop}.
\end{proof}

\section{Hyperoctahedral Homology of Group Algebras}
\label{grp-alg-sec}
We begin this section by demonstrating that hyperoctahedral homology of a group algebra, where the involution is induced from sending a group element to its inverse, is the homology of an infinite loop space on the two-sided bar construction. We will then show that in the case where this involution is a free $C_2$-action away from the basepoint one can say more; the hyperoctahedral homology in this case is the homology of the $C_2$-fixed points under the involution of an equivariant infinite loop space. In particular, this holds for all discrete groups of odd order.

Let $Q$ denote the free infinite loop space functor $\Omega^{\infty}\Sigma^{\infty}\colon \mathbf{Top}_{\star}\rightarrow \mathbf{Top}_{\star}$.

\begin{prop}
Let $G$ be a discrete group. Let the group algebra $k[G]$ have involution determined by sending a group element to its inverse. There is an isomorphism of graded $k$-modules $HO_{\star}\left(k[G]\right) \cong H_{\star}\left(QB\left(\mathsf{E},\mathsf{C}_1,G\right)\right)$.
\end{prop}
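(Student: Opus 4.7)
The plan is to specialise Theorem \ref{monoid-thm} to the case $M = G$ and then push the monad $\mathsf{C}_{\infty}$ outside the two-sided bar construction in order to identify it with the free infinite loop space functor $Q$. Concretely, the group inverse $g \mapsto g^{-1}$ is an anti-homomorphism of monoids squaring to the identity, and extends $k$-linearly to the involution on $k[G]$ described in the statement. Theorem \ref{monoid-thm} applied to this monoid with involution therefore gives
\[HO_{\star}(k[G]) \cong H_{\star}\bigl(B(\mathsf{C}_{\infty}\mathsf{E}, \mathsf{C}_1, G)\bigr),\]
so it suffices to produce a weak homotopy equivalence $B(\mathsf{C}_{\infty}\mathsf{E}, \mathsf{C}_1, G) \simeq QB(\mathsf{E}, \mathsf{C}_1, G)$.

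Next, I would commute $\mathsf{C}_{\infty}$ past the geometric realisation defining the bar construction. The simplicial space $B_{\star}(\mathsf{C}_{\infty}\mathsf{E}, \mathsf{C}_1, G)$ has $n$-simplices $\mathsf{C}_{\infty}\mathsf{E}\mathsf{C}_1^n G$, and because $\mathsf{C}_{\infty}$ arises from an operad it preserves geometric realisations of proper simplicial spaces (this is exactly the commutation argument already invoked via \cite[Lemma 9.7]{MayGILS} in the proof of Theorem \ref{monoid-thm}). One checks that the resulting simplicial space $\mathsf{E}\mathsf{C}_1^{\bullet}G$ is proper, so this yields a natural weak homotopy equivalence
\[B(\mathsf{C}_{\infty}\mathsf{E}, \mathsf{C}_1, G) \;\simeq\; \mathsf{C}_{\infty} B(\mathsf{E}, \mathsf{C}_1, G).\]

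The final step is to identify $\mathsf{C}_{\infty} Y$ with $QY$ for $Y = B(\mathsf{E}, \mathsf{C}_1, G)$. May's approximation/recognition theorem for the little $\infty$-cubes monad provides a natural map $\mathsf{C}_{\infty} Y \to QY$ which is a weak equivalence on connected spaces and, in general, a group completion. Here one uses crucially that $G$ is a group: the $A_{\infty}$-space $B(\mathsf{C}_1, \mathsf{C}_1, G) \simeq G$ is already group-like, so the induced $E_{\infty}$-structure on $\mathsf{C}_{\infty}B(\mathsf{E}, \mathsf{C}_1, G)$ has $\pi_0$ a group, and the group completion theorem of McDuff--Segal makes $\mathsf{C}_{\infty}B(\mathsf{E}, \mathsf{C}_1, G) \to QB(\mathsf{E}, \mathsf{C}_1, G)$ an integral homology isomorphism. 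Composing the three equivalences and taking homology yields the claim.

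The main obstacle is the last step: unlike in Theorem \ref{monoid-thm}, where the approximation $\mathsf{C}_{\infty} \simeq Q$ only had to be stated formally on all spaces, here one needs a genuine homology isomorphism of spaces, and the space $B(\mathsf{E}, \mathsf{C}_1, G)$ is typically disconnected (its $\pi_0$ is related to the set of $C_2$-orbits of $G$ under the involution). The proof therefore cannot use naive connectivity and must lean on the hypothesis that $G$ is a group in order to invoke group completion. The commutation step of moving $\mathsf{C}_{\infty}$ outside the bar construction is largely routine given the operadic framework, but requires one to verify the standard properness/non-degeneracy hypotheses for $B_{\star}(\mathsf{E}, \mathsf{C}_1, G)$.
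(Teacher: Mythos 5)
The first two steps of your plan (specialising Theorem \ref{monoid-thm} to $M=G$) match the paper, but the remainder diverges and contains a genuine gap. Your step of pulling $\mathsf{C}_{\infty}$ outside the bar construction, $B(\mathsf{C}_{\infty}\mathsf{E},\mathsf{C}_1,G)\simeq \mathsf{C}_{\infty}B(\mathsf{E},\mathsf{C}_1,G)$, is not available here: commuting the monad of an operad with geometric realisation is fine, but it only applies if the simplicial space $B_{\star}(\mathsf{C}_{\infty}\mathsf{E},\mathsf{C}_1,G)$ is $\mathsf{C}_{\infty}$ applied to a simplicial space, i.e.\ if the $\mathsf{C}_1$-functor structure map of $\mathsf{C}_{\infty}\mathsf{E}$ were $\mathsf{C}_{\infty}$ of a structure map $\mathsf{E}\mathsf{C}_1\Rightarrow\mathsf{E}$. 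It is not: the structure is transported from the $J$-functor structure of $\mathrm{hocolim}\,\mathsf{H}_{J(-)}$ (via Corollary \ref{Z-cor}), and its zeroth face feeds letters of $G$ from inside the argument of $\mathsf{E}$ into the outer operadic coordinates, while the top face multiplies them in $G$. This interplay is exactly what makes the realised bar construction group-like when $G$ is a group ($[g]+[g^{-1}]$ becomes the basepoint component), and it is destroyed by your rewriting.

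The second, fatal, problem is the group-completion step. For any based space $W$, $\pi_0(\mathsf{C}_{\infty}W)$ is the free abelian monoid on $\pi_0(W)$ minus the basepoint component; it is a group only when $W$ is connected. Since, as you yourself note, $B(\mathsf{E},\mathsf{C}_1,G)$ is typically disconnected, $\mathsf{C}_{\infty}B(\mathsf{E},\mathsf{C}_1,G)$ is never group-like, the hypothesis that $G$ is a group cannot rescue this, and McDuff--Segal only gives a homology isomorphism after inverting $\pi_0$, not integrally. Indeed, if your commutation step and your group-completion step were both valid you would conclude $\mathsf{C}_{\infty}Y\simeq QY$ for a disconnected $Y$, which is false. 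The paper avoids both issues by working inside the bar construction: since $G$ is a group the realisation $B(\mathsf{C}_{\infty}\mathsf{E},\mathsf{C}_1,G)$ (not a free $E_{\infty}$-space) is group-like, so the recognition principle \cite[\RNum{7}, Theorem 3.1(ii)]{May77} gives $B(\mathsf{C}_{\infty}\mathsf{E},\mathsf{C}_1,G)\simeq B(Q\mathsf{E},\mathsf{C}_1,G)$, and only the already-stable functor $Q$ is then taken outside via \cite[Lemma 9.7]{MayGILS} to obtain $QB(\mathsf{E},\mathsf{C}_1,G)$. If you want to salvage your route, the group-like replacement has to happen before $\mathsf{C}_{\infty}$ is moved, which is precisely the paper's order of operations.
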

\begin{proof}
Since $G$ is a group, $B\left(\mathsf{C}_{\infty}\mathsf{E}, \mathsf{C}_1 ,G\right)$ is group-like. We can therefore apply the version of the recognition principle given in \cite[\RNum{7}, Theorem 3.1(ii)]{May77} to deduce a weak homotopy equivalence $B\left(\mathsf{C}_{\infty}\mathsf{E}, \mathsf{C}_1 ,G\right) \simeq B\left(Q\mathsf{E}, \mathsf{C}_1 ,G\right)$. By \cite[Lemma 9.7]{MayGILS} there is an isomorphism $B\left(Q\mathsf{E}, \mathsf{C}_1 ,G\right)\cong QB\left(\mathsf{E}, \mathsf{C}_1 ,G\right)$. The proposition then follows from Theorem \ref{monoid-thm}.
\end{proof}

Let $\Phi\colon C_2\mhyphen\mathbf{Top}_{\star}\rightarrow \mathbf{Top}_{\star}$ denote the $C_2$-fixed point functor. 

Let $\Sigma_{C_2}\colon C_2\mhyphen\mathbf{Top}_{\star}\rightarrow C_2\mhyphen\mathbf{Top}_{\star}$ denote the $C_2$-equivariant suspension functor. For $X\in C_2\mhyphen\mathbf{Top}_{\star}$, the space $\Sigma_{C_2}(X)=S^1\wedge X$ has involution defined by $\ol{\left[\left(t,x\right)\right]}=\left[\left(1-t , \ol{x}\right)\right]$.

Let $\Omega_{C_2}\colon C_2\mhyphen\mathbf{Top}_{\star}\rightarrow C_2\mhyphen\mathbf{Top}_{\star}$ denote the $C_2$-equivariant loop functor. For $X\in C_2\mhyphen\mathbf{Top}_{\star}$, the space $\Omega_{C_2}(X)= \mr{Hom}_{\mathbf{Top}_{\star}}\left(S^1,X\right)$ has involution defined by $\ol{f}(t)=\ol{f(1-t)}$. 
 
Let $Q_{C_2}\colon C_2\mhyphen\mathbf{Top}_{\star}\rightarrow C_2\mhyphen\mathbf{Top}_{\star}$ denote the $C_2$-equivariant free infinite loop space functor. That is, $Q_{C_2}=\Omega_{C_2}^{\infty}\Sigma_{C_2}^{\infty}$.

\begin{rem}
\label{equiv-Q-rem}
Similarly to the non-equivariant case there is a natural $C_2$-weak homotopy equivalence $ \Omega_{C_2}Q_{C_2}\Sigma_{C_2}(X) \simeq Q_{C_2}(X)$.
\end{rem}

\begin{prop}
\label{fixed-pt-prop}
Let $X\in C_2\mhyphen \mathbf{Top}_{\star}$ have a free $C_2$-action away from the basepoint. There is a natural weak homotopy equivalence of topological spaces $\Phi\left(Q_{C_2}(X)\right) \simeq Q\mathsf{E}(X)$.
\end{prop}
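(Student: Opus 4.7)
The plan is to deduce the equivalence from the \emph{tom Dieck splitting theorem} for the equivariant infinite loop space $Q_{C_2}=\Omega_{C_2}^{\infty}\Sigma_{C_2}^{\infty}$. At the level of genuine $G$-spectra, tom Dieck provides a splitting
\[\Phi\bigl(\Sigma_{G}^{\infty} Y\bigr)\simeq \bigvee_{(H)\leqslant G}\Sigma^{\infty}\bigl(E(WH)_{+}\wedge_{WH} Y^{H}\bigr),\]
natural in the well-pointed based $G$-space $Y$, where the wedge is taken over conjugacy classes of subgroups and $WH=N_{G}(H)/H$. Since a finite wedge of spectra agrees with the corresponding product and $\Omega^{\infty}$ preserves finite products, applying $\Omega^{\infty}$ and using $\Phi(Q_{G}(Y))=\Omega^{\infty}\Phi(\Sigma_{G}^{\infty} Y)$ yields
\[\Phi\bigl(Q_{G}(Y)\bigr)\simeq \prod_{(H)\leqslant G} Q\bigl(E(WH)_{+}\wedge_{WH} Y^{H}\bigr).\]

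For $G=C_{2}$ there are exactly two subgroups: the trivial subgroup, with Weyl group $C_{2}$, and $C_{2}$ itself, with trivial Weyl group. Specialising to the given $X$ gives
\[\Phi\bigl(Q_{C_{2}}(X)\bigr)\simeq Q(EC_{2+}\wedge_{C_{2}} X)\times Q\bigl(X^{C_{2}}\bigr)=Q\mathsf{E}(X)\times Q\bigl(X^{C_{2}}\bigr).\]
Under the hypothesis that the $C_{2}$-action on $X$ is free away from the basepoint, the fixed-point space $X^{C_{2}}$ is a single point. Hence $Q(X^{C_{2}})=Q(\ast)\simeq \ast$, and the projection to the first factor produces the claimed natural weak homotopy equivalence $\Phi(Q_{C_{2}}(X))\simeq Q\mathsf{E}(X)$, with the relevant component of the splitting being induced by the canonical transfer-type map $EC_{2+}\wedge_{C_{2}} X\to \Phi Q_{C_{2}}(X)$ obtained from the unit $X\to Q_{C_{2}}X$ and the $C_{2}$-action.

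The main obstacle I anticipate is a bookkeeping one: ensuring that the functors $\Sigma_{C_{2}}^{\infty}$ and $\Omega_{C_{2}}^{\infty}$ implicit in the paper's definition of $Q_{C_{2}}$ are genuinely $C_{2}$-equivariant (indexed on a complete $C_{2}$-universe, or at least one containing the sign representation), since tom Dieck's splitting relies on the existence of the equivariant transfer for the finite cover $EC_{2}\to BC_{2}$. If one wishes to avoid importing the full splitting, the alternative route is to give a direct argument using Remark~\ref{equiv-Q-rem} together with an explicit configuration-space or $\Gamma$-space model of $Q_{C_{2}}$: compute $C_{2}$-fixed points of $E\Sigma_{n+1}\times_{\Sigma_{n+1}}(EC_{2+}\wedge_{C_{2}} X)^{n+1}$ (as in the proof of Corollary~\ref{Z-cor}) and, using freeness of the action on $X$, identify the fixed-point subspace with the corresponding stage of the non-equivariant $Q\mathsf{E}(X)$ before passing to the colimit.
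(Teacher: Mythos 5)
Your proof is correct and follows essentially the same route as the paper: the paper also invokes the tom Dieck splitting (citing Carlsson's survey) in the form $\Phi(Q_{C_2}(X))\simeq Q\mathsf{E}(X)\times Q(\Phi(X))$ and then uses freeness of the action away from the basepoint to discard the fixed-point factor. Your extra remarks on the genuine $C_2$-universe and the alternative configuration-space argument go beyond what the paper records, but the core argument is identical.
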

\begin{proof}
A theorem of tom Dieck \cite[Corollary 2.3]{carlsson} tells us that for a based $C_2$-space $X$ there is a weak homotopy equivalence $\Phi\left(Q_{C_2}(X)\right)\simeq Q\mathsf{E}(X) \times Q\left(\Phi(X)\right)$.
Since we are assuming that the $C_2$-action on $X$ is free, the fixed points $\Phi(X)$ are trivial and so $Q\left(\Phi(X)\right)$ is trivial. We therefore deduce the required weak homotopy equivalence.
\end{proof}

In the non-equivariant case, Fiedorowicz \cite[Corollary 7.9]{FieCS} showed that for any topological monoid $M$ there is a weak homotopy equivalence $B\left(\Sigma, \mathsf{C}_1,M\right) \rightarrow BM$, between the bar construction and the classifying space of $M$. We will show that this extends to a $C_2$-equivariant weak homotopy equivalence when $M$ is a topological monoid with involution.

\begin{defn}
\label{bar-constr-inv}
For a topological monoid with involution $M$, we define an involution on $B_n\left(\Sigma_{C_2},\mathsf{C}_1, M\right)=S^1\wedge \mathsf{C}_1^nM$, the $n^{th}$ level of the bar construction, by $\ol{\left[\left(t,x\right)\right]}=\left[\left(1-t , \ol{x}\right)\right]$ where $t\in S^1$ and $x\in \mathsf{C}_1^nM$. We define the geometric realization $B\left(\Sigma_{C_2},\mathsf{C}_1, M\right)$ to have the involution induced by the level-wise involution.
\end{defn}

\begin{defn}
\label{class-sp-inv}
Let $M$ be a topological monoid with involution. We define an involution on the classifying space $BM$ by $ \ol{\left[\left(t_0,\dotsc , t_n\right),\left(m_1,\dotsc , m_n\right)\right]}=\left[\left(t_n,\dotsc , t_0\right),\left(\ol{m_n},\dotsc ,\ol{m_1}\right)\right]$.
\end{defn}

\begin{rem}
Let $G$ be a discrete group with involution given by sending an element to its inverse. The involution on $G$ induces an involution on the classifying space $BG$ which is equivariantly homotopic to the involution of Definition \ref{class-sp-inv} by \cite[Construction 3.1]{BF}.
\end{rem}

\begin{prop}
\label{equiv-Q-prop}
Let $M$ be a topological monoid with involution. There exists a $C_2$-weak homotopy equivalence $B\left(Q_{C_2},\mathsf{C}_1,M\right)\rightarrow \Omega_{C_2}Q_{C_2} BM$.
\end{prop}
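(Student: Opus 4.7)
The plan is to mimic the non-equivariant argument of Fiedorowicz (that yielded $B(Q,\mathsf{C}_1,M)\simeq \Omega QBM$), adapting each step to the $C_2$-equivariant setting. The chain of (weak) equivalences I would construct is
\[B(Q_{C_2},\mathsf{C}_1,M) \xrightarrow{\simeq} B(\Omega_{C_2}Q_{C_2}\Sigma_{C_2},\mathsf{C}_1,M) \cong \Omega_{C_2} B(Q_{C_2}\Sigma_{C_2},\mathsf{C}_1,M) \cong \Omega_{C_2}Q_{C_2} B(\Sigma_{C_2},\mathsf{C}_1,M) \xrightarrow{\simeq} \Omega_{C_2}Q_{C_2}BM.\]
Each arrow needs to be shown to be a $C_2$-weak homotopy equivalence, i.e.\ a weak equivalence on both $C_2$-fixed point sets.

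For the first arrow, I would apply the natural $C_2$-weak equivalence $Q_{C_2}(X)\simeq \Omega_{C_2}Q_{C_2}\Sigma_{C_2}(X)$ of Remark \ref{equiv-Q-rem} level-wise in the simplicial bar construction, and use that geometric realization preserves $C_2$-weak homotopy equivalences of proper simplicial $C_2$-spaces. The two isomorphisms in the middle are $C_2$-equivariant versions of \cite[Lemma 9.7]{MayGILS}: $\Omega_{C_2}$ commutes with geometric realizations of simplicial $C_2$-spaces (with the involution on $\Omega_{C_2}$ coming from the $S^1$-reflection used in Definition \ref{bar-constr-inv}), and $Q_{C_2}\Sigma_{C_2}\cong \Omega_{C_2}^{\infty}\Sigma_{C_2}^{\infty}\Sigma_{C_2}$ commutes with the bar construction up to a natural $C_2$-homeomorphism because $\Sigma_{C_2}^{\infty}$ takes the bar construction to a bar construction of spectra and $\Omega_{C_2}^{\infty}$ of a geometric realization is the realization after applying $\Omega_{C_2}^{\infty}$.

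The main obstacle is the last arrow: extending Fiedorowicz's weak equivalence $B(\Sigma,\mathsf{C}_1,M)\to BM$ from \cite[Corollary 7.9]{FieCS} to a $C_2$-equivariant statement when $M$ is a topological monoid with involution. I would handle this by examining Fiedorowicz's map explicitly and checking that it is compatible with the involutions defined in Definitions \ref{bar-constr-inv} and \ref{class-sp-inv}: the reflection $t\mapsto 1-t$ on $S^1$ used in the bar-construction involution corresponds precisely to the reversal $\left(t_0,\dots,t_n\right)\mapsto\left(t_n,\dots,t_0\right)$ of simplicial coordinates in $BM$, while the level-wise involution on $M^n$ reverses factors as in the definition of $BM$. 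Having verified equivariance of the map, I would check that the induced map on $C_2$-fixed points is a weak homotopy equivalence, either by filtering both sides by skeleta and analysing the fixed points of each simplicial filtration degree, or by invoking an equivariant version of May's approximation theorem identifying the bar construction of a $C_2$-equivariant $A_\infty$ operad over a group-like monoid with its classifying space.

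Once these three building blocks are in place---the equivariant Remark \ref{equiv-Q-rem}, the equivariant version of \cite[Lemma 9.7]{MayGILS}, and the equivariant Fiedorowicz equivalence---the displayed composition assembles them into the required $C_2$-weak homotopy equivalence. I do not expect any subtleties from proper-ness of the simplicial $C_2$-spaces involved, because the bar constructions have the usual free degeneracies inherited from the monads $\mathsf{C}_1$ and $Q_{C_2}$, so geometric realization is homotopically well-behaved equivariantly.
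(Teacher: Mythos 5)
Your proposal is correct and follows essentially the same route as the paper: the paper likewise reduces everything to a $C_2$-equivariant upgrade of Fiedorowicz's equivalence $B\left(\Sigma_{C_2},\mathsf{C}_1,M\right)\rightarrow BM$ (checked against the involutions of Definitions \ref{bar-constr-inv} and \ref{class-sp-inv}, with the actions of \cite{ryb}), then commutes $\Omega_{C_2}Q_{C_2}$ into the bar construction via \cite[Lemma 9.7]{MayGILS} and finishes with Remark \ref{equiv-Q-rem}. Your chain is simply the paper's argument read in the opposite direction, with the "routine check" of equivariance and fixed-point behaviour spelled out in somewhat more detail.
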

\begin{proof}
A routine check shows that the chain of equivalences constructed in \cite[Theorems 7.3 and 7.8]{FieCS} are equivariant with respect to the actions defined in \cite[Section 1]{ryb}, yielding a weak $C_2$-equivariant weak homotopy equivalence $B\left(\Sigma_{C_2}, \mathsf{C}_1 , M\right)\rightarrow BM$, which sends the involution of Definition \ref{bar-constr-inv} to the involution of Definition \ref{class-sp-inv}.
Apply the functor $\Omega_{C_2}Q_{C_2}$ to both sides of the equivalence. By \cite[Lemma 9.7]{MayGILS} we can take these functors inside the bar construction on the left hand side. The result then follows from Remark \ref{equiv-Q-rem}.
\end{proof}

\begin{thm}
\label{main-theorem}
Let $G$ be a discrete group of odd order. Let the group algebra $k[G]$ and the classifying space $BG$ have the involution induced by sending a group element to its inverse. There is an isomorphism of graded $k$-modules $HO_{\star}\left(k[G]\right)\cong H_{\star}\left( \Phi \Omega_{C_2}Q_{C_2} BG\right)$.
\end{thm}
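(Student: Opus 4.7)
The plan is to deduce the theorem from the preceding proposition via the $C_2$-equivariant formalism. From the preceding proposition, $HO_{\star}\left(k[G]\right) \cong H_{\star}\left(QB\left(\mathsf{E},\mathsf{C}_1,G\right)\right)$. Applying the fixed-point functor $\Phi$ to the $C_2$-equivariant weak equivalence of Proposition \ref{equiv-Q-prop} gives
\[\Phi B\left(Q_{C_2}, \mathsf{C}_1, G\right) \simeq \Phi\Omega_{C_2} Q_{C_2} BG,\]
so it suffices to produce a weak homotopy equivalence $\Phi B\left(Q_{C_2},\mathsf{C}_1,G\right) \simeq QB\left(\mathsf{E},\mathsf{C}_1,G\right)$, at which point the two isomorphisms on homology can be chained together.

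I would construct this equivalence by interchanging $\Phi$ with the geometric realization of the two-sided bar construction and then invoking Proposition \ref{fixed-pt-prop} levelwise. At level $n$ this demands an equivalence $\Phi Q_{C_2}\left(\mathsf{C}_1^n G\right) \simeq Q\mathsf{E}\left(\mathsf{C}_1^n G\right)$, which by Proposition \ref{fixed-pt-prop} follows once $\mathsf{C}_1^n G$ has a free $C_2$-action away from the basepoint. The odd-order hypothesis on $G$ is precisely what delivers this property for $G$ itself: if $g \in G$ satisfies $g = g^{-1}$, then $g^2 = 1_G$, and in a group of odd order this forces $g = 1_G$, which is the basepoint. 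Once the levelwise equivalences are assembled they combine into $\Phi B\left(Q_{C_2},\mathsf{C}_1,G\right) \simeq B\left(Q\mathsf{E},\mathsf{C}_1,G\right)$, after which Lemma 9.7 of \cite{MayGILS} --- exactly as in the proof of the preceding proposition --- allows us to pull $Q$ outside the bar construction to obtain $QB\left(\mathsf{E},\mathsf{C}_1,G\right)$, closing the loop.

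The main obstacle is propagating the free-action hypothesis from $G$ through the iterated little intervals monad $\mathsf{C}_1^n$. Direct inspection of the $C_2$-action in Definition \ref{equiv-c1-defn} turns up apparent fixed configurations of the form $\left(\left(I,g\right),\left(tI,g^{-1}\right)\right)$ in $\mathsf{C}_1 G$ with $g \neq 1_G$, so one must verify carefully --- using the insertion/deletion-of-basepoints relation together with the odd-order hypothesis --- that every such configuration reduces to the basepoint in the quotient $\mathsf{C}_1^n G$. Should this fail, one can instead appeal to the stronger tom Dieck splitting $\Phi Q_{C_2}\left(X\right) \simeq Q\mathsf{E}\left(X\right) \times Q\Phi\left(X\right)$ and argue separately that the assembled $B\left(Q\Phi,\mathsf{C}_1,G\right)$ summand has trivial reduced homology when $|G|$ is odd, since $\Phi G$ is then trivial. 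Once this technical point is settled, the remainder of the proof is a routine assembly of the previously established equivalences.
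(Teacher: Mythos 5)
Your route is the one the paper itself takes: replace $Q\mathsf{E}$ by $\Phi Q_{C_2}$ levelwise in the two-sided bar construction via Proposition \ref{fixed-pt-prop}, commute $\Phi$ (and $Q$) past the bar construction via \cite[Lemma 9.7]{MayGILS}, and finish with Proposition \ref{equiv-Q-prop}; starting from the unnumbered group-algebra proposition rather than from Theorem \ref{monoid-thm} merely relocates the appeal to the recognition principle. The substantive issue is exactly the one you flag and then leave open. The levelwise comparison requires Proposition \ref{fixed-pt-prop} for $X=\mathsf{C}_1^n G$, and the reflection action of Definition \ref{equiv-c1-defn} really does have non-basepoint fixed configurations: for $g\neq 1_G$ the element of $\mathsf{C}_1 G$ with intervals $I$ and $tI$ (where $I\subset[0,\tfrac12)$) labelled by $g$ and $g^{-1}$ is fixed by the involution, and the insertion/deletion-of-basepoints relation cannot remove it since neither label is the unit. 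Hence $\mathsf{C}_1^n G$ fails to be free away from the basepoint for every $n\geqslant 1$, even when $G$ has odd order, and the hypothesis of Proposition \ref{fixed-pt-prop} is not satisfied where it is actually being invoked.

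Your fallback does not close this gap as stated: applied levelwise, the tom Dieck splitting produces the error term $Q\Phi\left(\mathsf{C}_1^n G\right)$, not $Q\Phi(G)$, and $\Phi\left(\mathsf{C}_1^n G\right)$ is far from trivial --- already $\Phi\left(\mathsf{C}_1 G\right)$ contains, after rescaling $[0,\tfrac12]$ to $[0,1]$, a copy of the nonequivariant configuration space of intervals labelled by arbitrary non-unit elements of $G$ --- so the triviality of $\Phi(G)$ does not propagate. One would need a genuine argument that the realization of $n\mapsto Q\Phi\left(\mathsf{C}_1^n G\right)$ contributes nothing to homology, or a reformulation of the comparison that only ever evaluates $\Phi Q_{C_2}$ on $G$ itself. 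In fairness, the paper's proof of Theorem \ref{main-theorem} applies Proposition \ref{fixed-pt-prop} inside the bar construction in precisely the same way and is silent on this point, so you have correctly located the step carrying the real content of the argument; but as written your proposal, like the sketch it reproduces, does not supply the missing verification.
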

\begin{proof}
By Theorem \ref{monoid-thm} we have an isomorphism $HO_{\star}\left(k[G]\right) \cong H_{\star}\left(B\left(\mathsf{C}_{\infty}\mathsf{E}, \mathsf{C}_1 ,G\right)\right)$. Since $G$ is a group, $B\left(\mathsf{C}_{\infty}\mathsf{E}, \mathsf{C}_1 ,G\right)$ is group-like and so by \cite[\RNum{7}, Theorem 3.1(ii)]{May77} there is a weak homotopy equivalence $B\left(\mathsf{C}_{\infty}\mathsf{E}, \mathsf{C}_1 ,G\right) \simeq B\left(Q\mathsf{E}, \mathsf{C}_1 ,G\right)$. Since $G$ has odd order, the involution is free away from the basepoint and we can apply Proposition \ref{fixed-pt-prop}, followed by \cite[Lemma 9.7]{MayGILS} and Proposition \ref{equiv-Q-prop} to obtain the following chain of equivalences
\[B\left(Q\mathsf{E}, \mathsf{C}_1 ,G\right) \simeq B\left(\Phi Q_{C_2},\mathsf{C}_1, G\right)\cong \Phi B\left( Q_{C_2},\mathsf{C}_1, G\right)\simeq \Phi \Omega_{C_2}Q_{C_2} BG\]
from which the theorem follows.
\end{proof}

\bibliographystyle{alpha}
\bibliography{HO-refs}

\end{document}